\newtheorem{theorem}{Theorem}
\newtheorem{remark}{Remark}
\newtheorem{proposition}{Proposition}[section]
\newtheorem{lemma}[proposition]{Lemma}
\newtheorem{prop}{Facts}
\newcommand{\eg}{\emph{e.g.}\ }
\newcommand{\CI}{\mathcal{I}}
\begin{document}

\title{Generalized
Curie-Weiss model and quadratic pressure in ergodic theory}

\author{Renaud Leplaideur}
\address{ISEA,
Universit\'e de Nouvelle Calédonie,
145, Avenue James Cook - BP R4
         98 851 - Nouméa Cedex. Nouvelle Cal\'edonie}
    
\address{ LMBA, UMR6205,
Universit\'e de Brest.}
\email{Renaud.Leplaideur@univ-nc.nc,
\url{http://pagesperso.univ-brest.fr/~leplaide/} }

\author{Frédérique Watbled} 
\address{LMBA, UMR 6205, Universit\'e de Bretagne Sud,  Campus de Tohannic, BP 573, 56017 Vannes, France.}
\email{frederique.watbled@univ-ubs.fr}

\date{Version of \today}
\thanks{F. Watbled
 thanks the IRMAR, CNRS UMR 6625, University of Rennes 1, for its hospitality.}
\thanks{The authors thank the Centre Henri Lebesgue ANR-11-LABX-0020-01 for creating an attractive mathematical environment}
\subjclass[2010]{37A35, 37A50, 37A60, 82B20, 82B30, 82C26} 
\keywords{thermodynamic formalism, equilibrium states, Curie-Weiss model, 
Curie-Weiss-Potts model, Gibbs measure, phase transition.}

\begin{abstract}
We explain the Curie Weiss  model in Statistical Mechanics within the Ergodic viewpoint. 
More precisely,  we simultaneously define in $\{-1,+1\}^{\N}$, on the one hand a generalized Curie Weiss model within the Statistical Mechanics viewpoint and on the other hand, quadratic free energy and quadratic pressure  within the Ergodic Theory viewpoint. We show that there are finitely many invariant measures which maximize the quadratic free energy. They are all Dynamical Gibbs Measures. 
Moreover, the Probabilistic Gibbs measures for generalized Curie Weiss model converge to a determined combination of the (dynamical) conformal measures associated to these Dynamical Gibbs Measures. 
The standard Curie Weiss model is a particular case of our generalized Curie Weiss model. 
An Ergodic viewpoint over the Curie Weiss Potts model is also given. 
 \end{abstract}

\maketitle

\section{Introduction}\label{sec:intro}
\subsection{Background, main motivations and results}

The notion of Gibbs measure comes from Statistical Mechanics. It has been studied a lot from the probabilistic viewpoint (see \cite{Georgii, Costeniuc-Ellis-Touchette,Ellis-NewmanI, Ellis-Newman-Stat}). This notion was introduced in Ergodic Theory in the 70's by Sinai, Ruelle and Bowen (see \cite{sinai, sinaitransi, ruelle, ruelletransi, bowen}). Since that moment, the thermodynamic formalism became in Dynamical Systems a purely mathematical question and has somehow become isolated from the original physical questions.

With years, it turned out that this situation has generated sources of confusions. The first one is that people share the same vocabulary but it is not clear that the same names precisely define the same notions in each viewpoint (ergodic vs physicist). We \eg refer to \emph{phase transition, Gibbs measures, pressure}.  
Furthermore, the confusion is also internal to Ergodic Theory. Indeed, the thermodynamic formalism is very differently presented  for $\Z$-actions (where the Transfer Operator plays a crucial role) or for $\Z^{d}$-actions (with $d>1$). For this later case, the thermodynamic formalism is much closer to what people in Statistical Mechanics or in Probability do. Now, it turns out that several questions arising for 1-dimensional actions ergodic theory have to be exported to the higher dimensional case (see \cite{BL2,BHL}). Therefore, it became important to make clear similitudes and differences in the thermodynamic formalism between physicist and  (1-d) ergodic viewpoints. 

Our first result (see Theorem \ref{thmCW}) states a kind of dictionary between thermodynamic formalism in Statistical Mechanics and Probability on the one hand, and  Ergodic Theory on the other hand. More precisely we explain  with the ergodic vocabulary the first-order phase transition arising for the Curie-Weiss Model (mean field case) and make precise the link between Gibbs measures within the Physicist/Probabilistic viewpoints and the Ergodic viewpoint. 
We initially decided to focus on the mean field case for the following reasons. First, there is a large literature dealing with this topic. Second, the mean field model is naturally represented into $\{-1,+1\}^{\N}$ and exhibits ``physical phase transitions''  that we wanted to compare with ``1-d ergodic phase transitions'' in $\{-1,+1\}^{\N}$.\\
From there, a natural question was to get a similar dictionary  for the Curie-Weiss-Potts model which is a generalization of the Curie-Weiss model. This is done in Theorem \ref{thmCWP}. 

These two results are then the motivation for our main result (see Theorem \ref{theo-originalsin}). The key point is that the Hamiltonian for the Curie-Weiss model is almost equal to the square of a  Birkhoff sum. Now, Birkhoff sum is a key object in Dynamical Systems.
We thus introduce within the Ergodic viewpoint the notion of \emph{quadratic free energy}. It is equal to the entropy plus the square of an integral. We are naturally led to study a variational principle, that is which invariant measures do maximize the quadratic free energy. This maximum defines the \emph{quadratic pressure}.
At the same time, we introduce a generalized Hamiltonian in the Curie Weiss model  and show the link between the associated Gibbs measures (within Physicist/Probabilistic viewpoint) and the Gibbs measures within the Ergodic viewpoint.
We show how first order phase transitions for this generalized Curie Weiss model are related to a bifurcation into the set of measures which maximize the quadratic free energy. 
Theorem  \ref{thmCW} is thus a particular case of Theorem \ref{theo-originalsin}.

We believe that this quadratic pressure generates further possible research questions in Ergodic Theory. Some of them  are discussed later (see Subsubsection \ref{subsubsec-discus}). Similarly, we believe that our generalized Curie Weiss model may have physical interest.


Finally, we point out that Theorem \ref{theo-originalsin} is not an extension of Theorem \ref{thmCWP}. There is no obstruction to define and study the quadratic pressure for more general subshift of finite type. Nevertheless, the Hamiltonian for the Curie-Weiss-Potts model does not write itself as a square of a Birkhoff sum, because one considers a vector-valued ``potential''. This is work in progress to give an extension of Theorem \ref{thmCWP} with the flavour of Theorem \ref{theo-originalsin}.

\subsection{Precise settings and results}
\subsubsection{Ergodic and Dynamical setting}
We consider a finite set $\L$ with cardinality bigger or equal to 2. It is called the alphabet. Then we consider the one-sided full shift $\S=\L^{\N}$ over $\L$. A point $x$ in $\S$ is a sequence $x_{0},x_{1},\ldots$ (also called an infinite word) where the $x_{i}$ are in $\L$. Most of the times we shall use the notation $x=x_{0}x_{1}x_{2}\ldots$. A $x_{i}\in\L$ can either be called a letter, or a digit or a symbol. 

The shift map $\s$ is defined by 
$$\s(x_{0}x_{1}x_{2}\ldots)=x_{1}x_{2}\ldots.$$

The distance between two points $x=x_{0}x_{1}\ldots$ and $y=y_{0}y_{1}\ldots$ is given by 
$$d(x,y)=\frac1{2^{\min\{n,\ x_{n}\neq y_{n}\}}}\cdot$$
A finite string of symbols $x_{0}\ldots x_{n-1}$ is also called a \emph{word}, of length $n$. For a word $w$, its length is $|w|$. A \emph{cylinder} (of length $n$) is denoted by $[x_{0}\ldots x_{n-1}]$. It is the set of points $y$ such that $y_{i}=x_{i}$ for $i=0,\ldots n-1$. We shall also talk about $n$-cylinder instead of cylinder of length $n$. 

If $w$ is the word of finite length $w_{0}\ldots w_{n-1}$ and $x$  is a word, the concatenation $wx$ is the new word $w_{0}w_{1}\ldots w_{n-1}x_{0}x_{1}\ldots$. 

For $\psi:\S\to \R$ continuous and $\beta>0$, the \emph{pressure function} is defined by 
\begin{equation}
\label{eq-def-pressure}
\cal{P}(\beta\psi):=\sup_{\mu}\left\{h_{\mu}+\beta\int_{\S}\psi\,d\mu\right\},
\end{equation}
where the supremum is taken among the set $\cal{M}_{\s}(\Sigma)$ of $\s$-invariant probabilities on $\S$ and $h_{\mu}$ is the Kolmogorov-Sinaï entropy of $\mu$. The real parameter $\beta$ is assumed to be positive because it represents the inverse of the temperature in statistical mechanics. 
It is known that the supremum is actually a maximum  and any measure for which the maximum is attained in \eqref{eq-def-pressure} is called an \emph{equilibrium state for $\beta\psi$}. We refer the reader to \cite{bowen,ruelle} for basic notions on thermodynamic formalism in ergodic theory. 

If $\psi$ is Lipschitz continuous then the Ruelle theorem (see \cite{Ruellecmp9}) states that  for every $\beta$, there is a unique equilibrium state for $\beta\psi$, which is denoted by $\wt{\mu}_{\beta\psi}$. 
It is \emph{ergodic} and it shall be called the \emph{dynamical Gibbs measure} (DGM for short\footnote{We prefer the adjective ``dynamical'' instead of ``ergodic'' to avoid the discussion if an ergodic Gibbs measure is ergodic or not.}). It is the unique $\s$-invariant probability measure which satisfies the property that for every  $x=x_{0}x_{1}\ldots$ and for every $n$,
\begin{equation}
\label{eq-def-gibbs-ergo}
e^{-C_{\beta}}\le\frac{\wt{\mu}_{\beta\psi}([x_{0}\ldots x_{n-1}])}{e^{\be.S_{n}(\psi)(x)-n\cal{P}(\beta\psi)}}\le e^{C_{\beta}},
\end{equation}
where $C_{\beta}$ is a positive real number depending only on $\beta$ and $\psi$ (but not on $x$ or $n$), and $S_{n}(\psi)$ stands for $\psi+\psi\circ\s+\ldots+\psi\circ\s^{n-1}$.

In this setting, the {\it $\be\psi$-conformal measure} is the unique probability 
measure such that for every $x$ and for every $n$,
\begin{equation}
\label{eq-def-mesconform}
\nu_{\beta\psi}([x_{0}\ldots x_{n-1}])=\int e^{\beta S_{n}(\psi)(x_{0}\ldots x_{n-1}y)-n\cal{P}(\beta\psi)}\,d\nu_{\be\psi}(y).
\end{equation}
A precise (and more technical) definition of conformal measure is given in page \pageref{rem-meas-confor}, where the connection between conformal measures and DGM
is stated. We emphasize that in our setting, conformal measures and DGM are equivalent measures and one can obtain one from the other. 

If the choice of $\psi$ is clear we shall drop the $\psi$ and write  $\wt{\mu}_{\beta}$, $\nu_{\be}$ and $\CP(\be)$. 

\subsubsection{The Curie-Weiss model}
We consider the case $\L=\{-1,+1\}$; $\S$ will be denoted by $\S_{2}$. 

If $\omega_{0}\ldots \omega_{n-1}$ is a finite word, we set 
\begin{equation}
\label{eq-def-hamil}
H_{n}(\om):=-\frac1{2n}\sum_{i,j=0}^{n-1}\om_{j}\om_{i}.
\end{equation}
It is called the \emph{Curie-Weiss Hamiltonian}. 
The \emph{empirical magnetization} for $\om$ is $m_{n}(\om):=\disp \frac1n\sum_{j=0}^{n-1}\om_{j}$. Then we have 
\begin{equation}\label{eq-hamilbirkhoff}
H_{n}(\om)=-\frac{n}2(m_{n}(\om))^{2}.
\end{equation}

We denote by $\bb P: =\rho^{\otimes\bb N}$ the product measure on $\S_2$, where $\rho$ is the uniform measure on $\{-1,1\}$, i.e. $\rho(\{1\})=\rho(\{-1\})=\frac{1}{2}$, and we define the
\emph{probabilistic Gibbs measure} (PGM for short) $\mu_{n,\beta}$ on $\S_2$ by
\begin{equation}\label{eq-def-gibbs}
\mu_{n,\beta}(d\omega):=\frac{e^{-\beta H_{n}(\om)}}{Z_{n,\beta}}\bb P(d\omega),
\end{equation}
where $Z_{n,\beta}$ is the normalization factor
$$Z_{n,\beta}=\disp\frac{1}{2^n}\sum_{\om',\ |\om'|=n}e^{-\beta H_{n}(\om')}.$$
Note that $\munbe$ can also be viewed as a probability defined on $\L^{n}$. 

The measure $\P$ is a Bernoulli measure and is $\s$-invariant. In Ergodic Theory it is usually called the Parry-measure (see \cite{parry-pollicott}) and turns out to be  the unique measure with maximal entropy.
With our previous notations it corresponds to the DGM $\wt\mu_{0}$.

If $P_n$, $P$ are probability measures on the Borel sets of a metric space $S$, we say that $P_n$ converges weakly to $P$ if $\int_S f\,dP_n\rightarrow \int_S f\, dP$ for each $f$ in the class $C_b(S)$ of bounded, continuous real functions $f$ on $S$. In this case we write $P_{n}\stackrel[n\to+\8]{w}{\longrightarrow}P$.

Our first result concerns the weak convergence of the measures $\mu_{n,\be}$.

\begin{theorem}\label{thmCW}\emph{\textbf{Weak convergence for the CW model}}

Let $\xi_{\beta}$ be the unique point in $[0,1]$ which realizes the maximum for $$\varphi_{I}(x):=\log(\cosh(\beta x))-\frac\beta2x^{2}.$$ 
Let $\wt{\mu}_{b}$ be the dynamical Gibbs measure for $b(\BBone_{[+1]}-\BBone_{[-1]})$.
Then
\begin{equation}\label{limCW}
\mu_{n,\beta}\stackrel[n\to+\8]{w}{\longrightarrow}
\left\{\begin{aligned}
&\wt\mu_{0}& \textrm{ if } \be\leq 1,\\
&\frac{1}{2}\left[\wt{\mu}_{\beta \xi_{\beta}}+\wt{\mu}_{-\beta \xi_{\beta}}\right]&
\textrm{ if } \be > 1.
\end{aligned}\right.
\end{equation}
\end{theorem}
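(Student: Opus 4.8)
The plan is to linearise the quadratic Hamiltonian by the Hubbard--Stratonovich (Gaussian) transformation and then to reduce the convergence to a Laplace-type concentration governed by $\varphi_{I}$. First I would use \eqref{eq-hamilbirkhoff} to write the Gibbs weight as $e^{-\beta H_{n}(\om)}=e^{\frac{\beta n}{2}m_{n}(\om)^{2}}$ and apply the identity $e^{a^{2}/2}=\frac{1}{\sqrt{2\pi}}\int_{\R}e^{-x^{2}/2+ax}\,dx$ with $a=\sqrt{\beta n}\,m_{n}(\om)$. After the change of variable $x=\sqrt{\beta n}\,t$ this gives
\begin{equation*}
e^{\frac{\beta n}{2}m_{n}(\om)^{2}}=\sqrt{\frac{\beta n}{2\pi}}\int_{\R}e^{\beta n\left(t\,m_{n}(\om)-\frac{t^{2}}{2}\right)}\,dt .
\end{equation*}
Integrating against $\P$ and using that the coordinates are i.i.d.\ uniform on $\{-1,1\}$ yields $\E_{\P}[e^{\beta n\,t\,m_{n}}]=(\cosh\beta t)^{n}$, hence $Z_{n,\beta}=\sqrt{\frac{\beta n}{2\pi}}\int_{\R}e^{n\varphi_{I}(t)}\,dt$; this already explains why $\varphi_{I}$ governs the problem.

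Second I would rewrite the tilted weight as a change of measure. Let $q_{b}$ be the probability on $\{-1,1\}$ with $q_{b}(\pm1)=e^{\pm b}/(2\cosh b)$, and let $Q_{t}^{(n)}$ be the measure on $\S_{2}$ that equals $q_{\beta t}$ on each of the first $n$ coordinates and $\rho$ on the rest. Since $\prod_{j=0}^{n-1}e^{\beta t\om_{j}}=(\cosh\beta t)^{n}\,\frac{dQ_{t}^{(n)}}{d\P}$, the same computation applied to $\E_{\P}[f\,e^{\frac{\beta n}{2}m_{n}^{2}}]$ gives, for every bounded continuous $f$,
\begin{equation*}
\int_{\S_{2}} f\,d\munbe=\frac{\int_{\R}e^{n\varphi_{I}(t)}\,\E_{Q_{t}^{(n)}}[f]\,dt}{\int_{\R}e^{n\varphi_{I}(t)}\,dt}.
\end{equation*}
The key structural point is that $Q_{t}^{(n)}$ is a product measure with marginal $q_{\beta t}$, so if $f$ depends only on the first $k$ coordinates then $\E_{Q_{t}^{(n)}}[f]=\int f\,d\wt\mu_{\beta t}=:g(t)$ as soon as $n\ge k$, where I would first record the standard fact that the DGM $\wt\mu_{b}$ of the locally constant potential $b(\BBone_{[+1]}-\BBone_{[-1]})$ is exactly the Bernoulli measure $q_{b}^{\otimes\N}$ (its marginal is found by maximising $-p\log p-(1-p)\log(1-p)+b(2p-1)$). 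The function $g$ is continuous in $t$ and bounded by $\nor{f}_{\infty}$.

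Third, the problem reduces to showing that the probability measures $d\nu_{n}(t):=e^{n\varphi_{I}(t)}\,dt/\int_{\R}e^{n\varphi_{I}}$ concentrate on the maximisers of $\varphi_{I}$. Here I would analyse the mean-field equation $\varphi_{I}'(t)=\beta(\tanh(\beta t)-t)=0$: for $\beta\le1$ the only critical point is $t=0$, which is the global maximum and equals $\xi_{\beta}$; for $\beta>1$ one has $\varphi_{I}''(0)=\beta(\beta-1)>0$, so $0$ is a local minimum while, by evenness of $\varphi_{I}$, the two nonzero solutions $\pm\xi_{\beta}$ are global maxima with $\varphi_{I}(\xi_{\beta})=\varphi_{I}(-\xi_{\beta})$. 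A standard Laplace argument (splitting $\R$ into small neighbourhoods of the maximiser(s) and a remainder that is exponentially negligible because $\varphi_{I}(t)\sim-\frac{\beta}{2}t^{2}$ at infinity) then shows $\nu_{n}\to\delta_{0}$ when $\beta\le1$ and $\nu_{n}\to\frac{1}{2}(\delta_{\xi_{\beta}}+\delta_{-\xi_{\beta}})$ when $\beta>1$, the exact symmetry of $\varphi_{I}$ forcing the $\tfrac12$--$\tfrac12$ split. Testing against the bounded continuous $g$ and recalling $g(\pm\xi_{\beta})=\int f\,d\wt\mu_{\pm\beta\xi_{\beta}}$ gives the announced limits; a density argument extends the conclusion from cylinder functions to all of $C_{b}(\S_{2})$.

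The main obstacle I anticipate is the clean justification of the Laplace concentration in the critical and near-critical regime: at $\beta=1$ the maximum at $0$ is degenerate ($\varphi_{I}''(0)=0$), so one cannot invoke the naive quadratic saddle estimate and must argue directly that the mass still collapses onto $0$; and for $\beta>1$ one must ensure that the two peaks carry \emph{asymptotically equal} mass despite the finite-$n$ corrections, which is precisely where the exact evenness of $\varphi_{I}$ (rather than a mere asymptotic balance) is essential. The other point requiring care is controlling, via the cylinder approximation, the uniformity in $t$ of the identity $\E_{Q_{t}^{(n)}}[f]=g(t)$ for general (non-cylinder) $f$ before passing to the limit.
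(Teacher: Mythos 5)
Your proposal is correct, and it takes a genuinely different route from the paper's. The paper obtains Theorem \ref{thmCW} as the special case $\psi=\BBone_{[+1]}-\BBone_{[-1]}$ of Theorem \ref{theo-originalsin}, whose proof shares your Hubbard--Stratonovich opening but then runs through the Ruelle transfer operator: the inner expectation is expanded via the spectral decomposition $\CL^{n}_{\be z}(\BBone_{[\om]})=\l_{\be z}^{n}\nu_{\be z}([\om])H_{\be z}+\l_{\be z}^{n}e^{-n\eps(\be z)}T(n,\be z)$, which requires lower semi-continuity of the spectral gap, a uniform Laplace lemma (Lemma \ref{Laplace-uniforme}), and, at degenerate maxima, higher-order Laplace expansions; the weights of the limiting combination are then read off the asymptotics \eqref{equ1-cj}--\eqref{equ2-cj}. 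You instead exploit a feature special to this $\psi$, namely that $S_n\psi(\om)=\om_0+\cdots+\om_{n-1}$ is a sum of functions of single coordinates: the tilted measure $Q^{(n)}_t$ is exactly the Bernoulli product $q_{\beta t}^{\otimes n}$ (completed by $\rho$ on the remaining coordinates), so for a cylinder function $f$ the inner expectation equals $\int f\,d\wt\mu_{\beta t}$ exactly, with no spectral theory and no error term; and the $\frac12$--$\frac12$ split is forced for every finite $n$ by the exact evenness of $\varphi_I$, rather than extracted from second-derivative asymptotics. Your identification $\wt\mu_b=q_b^{\otimes\N}$ is precisely the two-letter analog of the paper's Lemma \ref{lem-muk}, and indeed your argument is closer in spirit to the paper's proof of the Curie--Weiss--Potts Theorem \ref{thmCWP} in Section \ref{sec-proofthcwp} (explicit summation over words, product structure, Laplace method, Bernoulli identification) than to its proof of Theorem \ref{theo-originalsin}. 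What each approach buys: the paper's machinery covers every H\"older $\psi$, where no product structure is available after tilting; yours is elementary and self-contained but tied to $\psi$ depending on the coordinate $\om_0$ alone. Finally, the two obstacles you flag resolve easily: at $\be=1$ the degeneracy of the maximum is harmless, because weak convergence of your measures $\nu_n$ only needs a unique global maximizer together with the quadratic decay of $\varphi_I$ at infinity (no saddle-point rate is required, since only relative masses matter); and the extension from cylinder functions to all of $C_b(\S_2)$ needs no uniformity in $t$ --- convergence on cylinders plus density of locally constant functions in $C(\S_2)$, all measures being probabilities, already yields weak convergence, exactly as the paper argues at the start of Section \ref{sec-prooforiginalsin}.
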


\begin{remark}
Actually $\munbe$ converges towards $\frac{1}{2}\left[\wt{\mu}_{\beta \xi_{\beta}}+\wt{\mu}_{-\beta \xi_{\beta}}\right]$ for every $\beta >0$ since 
we shall see that for $\be\leq 1$ we have $\xi_\be=0$, and it is clear that $\wt{\mu}_{0}=\rho^{\otimes\bb N}$.
\end{remark}

We refer to \cite{Ellis-livre}, sections IV.4 and V.9, for a discussion of the Curie-Weiss model and historical references (see also \cite{Rassoul-Seppala}, section 3.4).
We also mention that the weak convergence of $\mu_{n,\beta}$ was already proved by Orey (\cite{Orey1988}, Corollary 1.2) 
by a nice simple probabilistic argument. We remind that our motivation is the dictionary aspect and not the convergence. 

We emphasize the equality
\begin{equation}\label{eq-mnbirkhoff}
m_{n}(\om):=\disp\frac1nS_{n}(\BBone_{[+1]}-\BBone_{[-1]})(\omega)
\end{equation}
which shows that $m_{n}$ can be written as a Birkhoff mean of a continuous function. 

A consequence of \eqref{eq-mnbirkhoff} is that 
 \eqref{eq-hamilbirkhoff} can be rewritten under the form 
$$H_{n}(\om)=-\frac{n}2\left(\frac1nS_{n}(\psi)(\om)\right)^{2},$$
where $\psi:=\BBone_{[+1]}-\BBone_{[-1]}$. 


\subsubsection{The generalized Curie Weiss model}
If $\psi$ is a H\"older continuous function on $\S_2$, we define the 
\emph{generalized Curie-Weiss Hamiltonian} $H^\psi_{n}$ associated to $\psi$ by setting
$$H^\psi_{n}(\om)=-\frac{n}{2}\left(\frac1nS_{n}(\psi)(\om)\right)^{2}.$$
Then $\munbe^\psi$ is the PGM defined by 
\begin{equation}\label{eq-def-gibbs-general}
d\mu^\psi_{n,\beta}(d\omega):=\frac{e^{-\beta H^\psi_{n}(\om)}}{Z^\psi_{n,\beta}}d\bb P(\omega),
\textrm{ with } Z^\psi_{n,\beta}=\int_{\S_2}e^{-\beta H^\psi_{n}}\,d\bb P.
\end{equation}

If $\mu$ is an invariant measure on $\S_{2}$, we define its \emph{quadratic free energy}  by
$$h_{\mu}+\frac{\beta}{2}\left(\int_{\S_{2}}\psi\,d\mu\right)^{2}.$$
Then we define the \emph{quadratic pressure function (associated to $\Psi$)} by  
\begin{equation}
\label{eq-def-quadratic-pressure}
\cal{P}_2(\beta \psi):=\sup_{\mu}\left\{h_{\mu}+\frac{\beta}{2}\left(\int_{\S_2}\psi\,d\mu\right)^2\right\}.
\end{equation}
Upper semi-continuity for the entropy immediately shows that the supremum is a maximum. The function $\beta\mapsto \cal P_{2}(\beta \psi)$ is obviously convex (thus continuous).

\begin{theorem}\label{theo-originalsin}{\bf Weak convergence for the generalized Curie Weiss model}

Let $\psi$ be a H\"older continuous function on $\S_2$, let $\be$ be a positive real number.
\begin{enumerate}
\item There are finitely many invariant probabilities $m_1,\cdots,m_J$ (with $J=J(\be)$) whose quadratic free energy (for $\beta$) is maximal and thus equal to the quadratic pressure $\CP_{2}(\be\psi)$.
\item Each $m_i$ is the unique equilibrium state $\wt{\mu}_{\beta t_i\psi}$ for the potential $\be t_i\psi$.
\item The numbers $t_1,\cdots,t_J$ are the maxima of the auxiliary function 
$$\varphi_{\textrm{OS}}(t):=\cal P(\be t\psi)-\frac{\be}{2}t^2.$$
\item As $n$ goes to $+\8$, $\mu^\psi_{n,\be}$ converges weakly to a convex combination 
of the conformal measures $\nu_{\be t_{j}}$'s associated to
$\be t_{j}\psi$:
$$\munbe\stackrel[n\to+\8]{w}{\longrightarrow}\sum_{j=1}^J c_{j}\nu_{\be t_{j}}.$$
The $c_{j}$'s are well identified (see formulas \eqref{equ1-cj} and \eqref{equ2-cj}). 
\end{enumerate}
\end{theorem}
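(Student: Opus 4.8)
The plan is to linearize the quadratic Hamiltonian by a Gaussian (Hubbard--Stratonovich) transform and then run a Laplace/Varadhan analysis in the auxiliary variable. Since $-\be H^\psi_n=\frac{\be}{2n}(S_n\psi)^2$, applying the identity $e^{\frac{\be n}2 m^2}=\sqrt{\frac{\be n}{2\pi}}\int_\R e^{-\frac{\be n}2 t^2+\be n t m}\,dt$ with $m=\frac1n S_n\psi$ and using $\be n t m=S_n(\be t\psi)$ gives
\begin{equation}
e^{-\be H^\psi_n(\om)}=\sqrt{\tfrac{\be n}{2\pi}}\int_\R e^{-\frac{\be n}2 t^2+S_n(\be t\psi)(\om)}\,dt,
\end{equation}
hence, integrating against $\bb P$,
\begin{equation}
Z^\psi_{n,\be}=\sqrt{\tfrac{\be n}{2\pi}}\int_\R e^{-\frac{\be n}2 t^2}\Big(\int_{\S_2}e^{S_n(\be t\psi)}\,d\bb P\Big)\,dt.
\end{equation}
This replaces the quadratic model by a Gaussian average of the linear Hölder models $\be t\psi$, whose thermodynamics is controlled by the Ruelle transfer operator.

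I would then control the inner integral. Writing $\CL_{\be t\psi}$ for the transfer operator and using that $\bb P$ is the conformal measure of the constant potential $-\log2$, one has $\int_{\S_2}f\, e^{S_n(\be t\psi)}\,d\bb P=2^{-n}\int\CL^n_{\be t\psi}f\,d\bb P$, and the Ruelle--Perron--Frobenius theorem gives $\CL^n_{\be t\psi}f\sim e^{n\CP(\be t\psi)}h_t\int f\,d\nu_{\be t\psi}$, with $h_t$ the eigenfunction and $\nu_{\be t\psi}$ the conformal measure. Thus, for each fixed $t$,
\begin{equation}
\int_{\S_2}e^{S_n(\be t\psi)}\,d\bb P\sim A(t)\,e^{n(\CP(\be t\psi)-\log2)},\qquad A(t):=\int h_t\,d\bb P,
\end{equation}
and the normalized measures $e^{S_n(\be t\psi)}d\bb P\,/\int e^{S_n(\be t\psi)}d\bb P$ converge weakly to the conformal measure $\nu_{\be t\psi}$ (not to the DGM), which is what makes the conformal measures appear in the conclusion. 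Substituting this into $Z^\psi_{n,\be}$ shows the exponential rate in $t$ is exactly $\varphi_{\textrm{OS}}(t)-\log2$. A Fenchel-type computation, exchanging the two suprema and optimizing the resulting quadratic in $t$, identifies $\sup_t\varphi_{\textrm{OS}}(t)=\CP_2(\be\psi)$; moreover an invariant $\mu$ maximizes the quadratic free energy if and only if $t=\int\psi\,d\mu$ maximizes $\varphi_{\textrm{OS}}$ and $\mu$ is an equilibrium state for $\be t\psi$, i.e. $\mu=\wt\mu_{\be t\psi}$ by Ruelle's uniqueness theorem. This yields (2) and (3). For (1) I would invoke real-analyticity of $t\mapsto\CP(\be t\psi)$ (analytic perturbation of the leading eigenvalue), so that $\varphi_{\textrm{OS}}$ is real-analytic and tends to $-\8$; a non-constant real-analytic function has only finitely many global maxima on the compact set that contains them, giving $J=J(\be)<\8$.

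Finally, for (4) I would write $\mu^\psi_{n,\be}$ as the Gaussian mixture $\int_\R \big(e^{S_n(\be t\psi)}d\bb P/\!\int e^{S_n(\be t\psi)}d\bb P\big)\,w_n(t)\,dt$, where $w_n(t)\propto e^{-\frac{\be n}2 t^2}\int e^{S_n(\be t\psi)}d\bb P\propto A(t)e^{n\varphi_{\textrm{OS}}(t)}$ is a probability density concentrating, by Laplace's method, on the maximizers $t_1,\dots,t_J$. Testing against locally constant functions (dense in $C(\S_2)$) and performing the local Laplace expansion around each $t_j$ — weighted by $A(t_j)$ and by the order of vanishing of $\varphi_{\textrm{OS}}'$ at $t_j$ — produces the constants $c_j$ and the limit $\sum_j c_j\,\nu_{\be t_j}$. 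The main obstacle is precisely this last step: I must make the weak convergence $e^{S_n(\be t\psi)}d\bb P/\!\int\cdots\to\nu_{\be t\psi}$ uniform enough in $t$ near each peak to interchange the limit with the $t$-integral, and I must carry out the Laplace asymptotics including the subexponential prefactor $A(t)$ and the possibly degenerate maxima (where $\varphi_{\textrm{OS}}''(t_j)=0$), which is exactly what distinguishes the two formulas \eqref{equ1-cj} and \eqref{equ2-cj} for the $c_j$.
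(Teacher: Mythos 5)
Your proposal follows essentially the same route as the paper's proof: Hubbard--Stratonovich linearization of the quadratic Hamiltonian, transfer-operator (Ruelle--Perron--Frobenius) asymptotics which make the conformal measures $\nu_{\be t\psi}$ (not the DGMs) appear, Fenchel--Legendre duality identifying the maximizers of $\varphi_{\textrm{OS}}$ with the quadratic equilibrium states $\wt\mu_{\be t_i\psi}$, real-analyticity of $\varphi_{\textrm{OS}}$ for finiteness of the $t_j$, and a Laplace expansion with degenerate maxima treated separately to produce the two formulas for the $c_j$. The ``main obstacle'' you flag --- uniformity in $t$ of the spectral asymptotics --- is exactly what the paper resolves, by first truncating the $t$-integral to a compact interval using the crude bound $\|\CL^n_\xi(T)\|_\infty\le 2^n e^{n\xi\|\psi\|_\infty}\|T\|_\infty$, then invoking lower semi-continuity of the spectral gap $\xi\mapsto\eps(\xi)$ so that the error term $e^{-n\eps(\be z)}$ tends to zero uniformly on that compact set, and finally applying a Laplace lemma that tolerates uniformly convergent prefactors $f_n\to f$.
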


We emphasize that 
Theorem \ref{thmCW} is a particular case of Theorem \ref{theo-originalsin} with $\psi=\BBone_{[+]}-\BBone_{[-]}$. In that case the pressure is easy to compute and is equal to
$$\cal P(\be\psi)=\log 2+\log(\cosh \be),$$
and we get $\varphi_{OS}(x)=\log 2+\varphi_{I}(x)$. Note that for this particular case, the DGM is also the conformal measure. 

\subsubsection{Comparison of definitions of phase transition}
Nowadays, a phase transition in ergodic theory means the lack of analyticity for the pressure function (see \eg \cite{letelier,sarig, smirnov}). It is known that this notion is transversal to the number of equilibrium states. One can have a loss of analyticity with only one equilibrium state (see  the Manneville-Pomeau example with good parameters, \cite{thaler}) or analyticity with several equilibrium states (see \cite{leplaideur-butterfly}).

For the quadratic pressure, things may be different. We remind that $z\mapsto \CP(z\psi)$ is analytic (for H\"older continuous $\psi$). Each $t_{i}$ is a maximum for $\varphi_{\textrm{OS}}$ and then satisfies $\disp\CP'(\be t_{i})=t_{i}$. It is thus highly probable that $t_{i}(\be)$ is locally analytic (and surely locally $\CC^{\8}$). 
Then, the quadratic pressure satisfies 
$$\CP_{2}(\be)=h_{\wt\mu_{\be t_{i}}\psi}+\frac\be2\left(\int_{\S_{2}}\psi\,d\wt\mu_{\be t_{i}\psi}\right)^{2}=\CP(\be t_{i}\psi)-\be t_{i}+\frac\be2t_{i}^{2}.$$
It is thus reasonable to expect $\CP_{2}(\be)$ to be at least piecewise $\CC^{\8}$ and even probably piecewise $\CC^{\omega}$. Moreover, we expect the borders of intervals of analyticity to be exactly where there is a change in the number of $t_{i}$'s.  \\
It is therefore very likely that  the loss of analyticity for the quadratic pressure is equivalent to a bifurcation in the number of ``quadratic'' equilibrium states. Actually, this is corroborated by Theorem \ref{thmCW}, where the quadratic pressure is piecewise analytic (and not analytic) and the number of quadratic equilibrium states change with respect to $\be$ exactly where analyticity fails. 

\subsubsection{Some consequences of Theorem \ref{theo-originalsin}}\label{subsubsec-discus}
Several questions naturally arise from Theorem \ref{theo-originalsin}. At that stage, we do not have more precise conjectures or ideas for answers. 
%
%

$\bullet$ For more geometric dynamical systems, one usually considers or studies the special class of \emph{physical or/and SRB-measures}. These measures are usually considered as the most natural ones  with the measures of maximal entropy. It is clear that measures of maximal entropy also maximize $\disp h_{\mu}+\left(\int_{\S_{2}}\psi\,d\mu\right)^{2}$ for $\psi\equiv 0$. 

A natural question is thus to  know if  for a system admitting one SRB-measure, there exists some potential $\psi$ such that the SRB measure maximizes the quadratic free energy $\disp h_{\mu}+\left(\int_{\S_{2}}\psi\,d\mu\right)^{2}$. 

\bigskip
$\bullet$ More generally, one can ask how big is the set of measures which maximize the quadratic pressure for some potential $\psi$ ? It is for instance known that any ergodic measure is an equilibrium state for some continuous potential (see \cite[Cor. 3.17]{ruelle}). Does it still hold for quadratic pressure ?

\bigskip
$\bullet$ Ergodic Optimization  studies what happens to DGM $\wt\mu_{\be\psi}$ as $\be$ goes to $+\8$. It is known that any accumulation point maximizes the integral of $\psi$ among invariant measures. The goal is to study if there is convergence and how is the limit selected among the simplex of $\psi$-\emph{maximizing measures}. 
The same kind of questions may be studied with the quadratic pressure. We point-out that non-linearity may introduce very new and different phenomena compared to the ``usual pressure''.

\subsubsection{The Curie-Weiss-Potts model. Probabilistic settings 2 and result}
The \emph{Curie-Weiss-Potts model} will be for $\L=\{\theta^{1},\ldots ,\theta^{q}\}$ with $q>2$. In that case we shall write $\S_{q}$ instead of $\S$. 

The Curie-Weiss-Potts Hamiltonian is defined for a finite word $\omega=\om_0\cdots\om_{n-1}$ by
\begin{equation}
\label{eq-def-hamilCWP}
H_{n}(\om):=-\frac1{2n}\sum_{i,j=0}^{n-1}\BBone_{\om_{j}=\om_{i}}.
\end{equation}
We define the vector
$L_{n}(\omega)=(L_{n,1}(\om),\cdots,L_{n,q}(\om))$ where 
$$L_{n,k}(\om)=\sum_{i=0}^{n-1}\BBone_{\omega_i=\theta^k}$$
is the number of digits of $\omega$ which take the value $\theta^k$,
so that we can write
$$\sum_{i,j=0}^{n-1}\BBone_{\om_{j}=\om_{i}}=
\sum_{k=1}^q\pare{\sum_{i=0}^{n-1}\ind_{\om_{i}=\theta^k}}^2
=\|L_n(\omega)\|^2,$$
where $\|\cdot\|$ stands for the Euclidean norm on $\bb R^q$.

We denote by $\bb P: =\rho^{\otimes\bb N}$ the product measure on $\S_q$, where $\rho$ is the uniform measure on $\Lambda$, i.e. $\rho=\frac{1}{q}\sum_{k=1}^q\delta_{\theta^k}$, and we define the probabilistic Gibbs measure $\mu_{n,\beta}$ on $\S_q$ by
\begin{equation}\label{eq-def-gibbsCWP}
\mu_{n,\beta}(d\omega):=\frac{e^{-\beta H_{n}(\om)}}{Z_{n,\beta}}\bb P(d\omega)
=\frac{e^{\frac{\be}{2n}\|L_{n}(\om)\|^2}}{Z_{n,\beta}}\bb P(d\omega),
\end{equation}
where $Z_{n,\beta}$ is the normalization factor
$$Z_{n,\beta}=\disp\frac{1}{q^n}\sum_{\om',\ |\om'|=n}e^{\frac{\be}{2n}\|L_{n}(\om')\|^2}.$$

Now we can state the analog of Theorem \ref{thmCW}.
\begin{theorem}\label{thmCWP}
\emph{\textbf{Weak convergence for the CWP model}}

For $1\leq k\leq q$, $b\in\bb R$, let $\wt{\mu}^{k}_{b}$ be the dynamical Gibbs measure for $b \BBone_{[\theta^k]}$.
Let $\be_c=\frac{2(q-1)\log(q-1)}{q-2}$. For $0<\be<\be_c$ set $s_\be=0$ and for $\beta\geq \beta_c$ let $s_\be$ be the largest solution of the equation
\begin{equation}\label{sbeta}
s=\frac{e^{\be s}-1}{e^{\be s}+q-1}.
\end{equation}

Then,
\begin{equation}\label{limCWP}
\mu_{n,\beta}\stackrel[n\to+\8]{w}{\longrightarrow}
\left\{\begin{aligned} 
&\rho^{\otimes\bb N}& \textrm{ if } 0<\be<\be_c,\\
&\frac{1}{q}
\sum_{k=1}^q \wt{\mu}_{\beta s_\be}^{k}& \textrm{ if } \be>\be_c ,\\
&\frac{A\, \wt{\mu}_{0}^{1}+B\,\sum_{k=1}^q \wt{\mu}_{\beta_c s_{\be_c}}^{k}}{A+qB}&
 \textrm{ if } \be=\be_c, \end{aligned}\right.
\end{equation}
with $A=\pare{1-\frac{\beta_c}{q(q-1)}}^{\frac{q-2}{2}}$ and 
$B=\pare{1-\frac{\beta_c}{q}}^{\frac{q-2}{2}}$.
\end{theorem}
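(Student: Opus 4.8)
The plan is to reduce the weak convergence to the convergence of $\mu_{n,\be}(C)$ for every cylinder $C$ (these generate the topology of the compact space $\S_q$, and finite combinations of their indicators are dense in $C(\S_q)$ by Stone--Weierstrass, so convergence on cylinders is equivalent to weak convergence), and then to evaluate each such number by a Laplace analysis. The starting device is the Hubbard--Stratonovich (Gaussian) linearization of the quadratic term: for $a\in\R^q$, $e^{\frac{\be}{2n}\|a\|^2}=\pare{\frac{n}{2\pi\be}}^{q/2}\int_{\R^q}e^{-\frac{n}{2\be}\|x\|^2+\langle a,x\rangle}\,dx$. Applying this with $a=L_n(\om)$ and observing that $\langle L_n(\om),x\rangle=S_n(\phi_x)(\om)$ for the locally constant potential $\phi_x:=\sum_k x_k\ind_{[\theta^k]}$, the Birkhoff structure becomes available: since $\P=\rho^{\otimes\bb N}$ is i.i.d., $\int_{\S_q}e^{S_n(\phi_x)}\,d\P=\pare{\frac1q\sum_k e^{x_k}}^n$. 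Hence the $\pare{\frac{n}{2\pi\be}}^{q/2}$ prefactors cancel in every ratio, and both $Z_{n,\be}$ and $\mu_{n,\be}(C)$ are governed by $\int_{\R^q}e^{nG_\be(x)}\,dx$ with $G_\be(x):=-\frac{1}{2\be}\|x\|^2+\log\pare{\frac1q\sum_k e^{x_k}}$, the numerator for a cylinder $C=[c_0\cdots c_{m-1}]$ carrying only the extra bounded factor $q^{-m}e^{\sum_{i<m}x_{\kappa(c_i)}}\pare{\frac1q\sum_k e^{x_k}}^{-m}$, where $\kappa(c)$ denotes the index with $c=\theta^{\kappa(c)}$.

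Next I would run Laplace's method on $\int e^{nG_\be}$. The critical point equation $\nabla G_\be=0$ reads $x_k=\be\,p_k(x)$ with $p_k(x)=e^{x_k}/\sum_j e^{x_j}$, so every maximizer satisfies $x^*=\be p^*$; since $p^*$ depends on $x^*$ only modulo the line $\R\mathbf 1$, the localization of the cylinder integral at $x^*$ produces exactly the factor $\prod_{i<m}p^*_{\kappa(c_i)}$, i.e.\ the value on $C$ of the Bernoulli measure with marginal $p^*$. By permutation symmetry the maximizers are the symmetric point $p^*=\frac1q\mathbf 1$ (giving $\rho^{\otimes\bb N}=\wt\mu^1_0$) and $q$ distinguished-axis points where one coordinate is favoured; writing the favoured weight as $\frac{1+(q-1)s}{q}$ and the others as $\frac{1-s}{q}$, the critical equation collapses to the scalar mean-field equation \eqref{sbeta}, and one checks $\be\,p^*=\be\frac{1-s}{q}\mathbf 1+\be s\,e_k$, so that the Bernoulli measure at this point is precisely $\wt\mu^k_{\be s}$. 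Comparing the values of $G_\be$ at the symmetric point and along the distinguished ray then singles out the threshold $\be_c$: for $0<\be<\be_c$ the symmetric point is the unique global maximizer, for $\be>\be_c$ the $q$ distinguished points (at the largest root $s_\be$ of \eqref{sbeta}) strictly dominate and share the weight equally by symmetry, and at $\be=\be_c$ all $q+1$ points are simultaneously global maximizers.

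It remains to compute the relative weights in the coexistence case $\be=\be_c$, and this is where the real work lies. Standard Laplace asymptotics assign to each nondegenerate maximizer $x^*$ a weight proportional to $\pare{\det(-\mathrm{Hess}\,G_{\be_c}(x^*))}^{-1/2}$, the common value $e^{n G_{\be_c}(x^*)}$ being identical for all of them by the very definition of $\be_c$. With $\mathrm{Hess}\,G_\be(x)=-\frac1\be I+(\diag(p)-pp^\top)$, the determinants are explicit once the eigenvalues of $\diag(p)-pp^\top$ are found (at the symmetric point: $0$ on $\mathbf 1$ and $\frac1q$ with multiplicity $q-1$; at a distinguished point: $0$ on $\mathbf 1$, the other-coordinate value with multiplicity $q-2$, and one further eigenvalue). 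Inserting $s_{\be_c}$ and using the identity defining $\be_c$ to simplify, the two determinants reduce to the stated closed forms, yielding weights proportional to $A$ for $\wt\mu^1_0$ and to $B$ for each of the $q$ measures $\wt\mu^k_{\be_c s_{\be_c}}$, hence the normalized combination $\frac{A\,\wt\mu^1_0+B\sum_k\wt\mu^k_{\be_c s_{\be_c}}}{A+qB}$.

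The main obstacles I anticipate are twofold. First, making the Laplace step rigorous: dominating the integrand, discarding the contribution away from the finitely many maximizers, and controlling the near-degenerate directions of the Hessian (the $\mathbf 1$-direction, and near $\be_c$ the flattening of the symmetric well) requires genuine uniform estimates rather than a formal saddle expansion; one must also justify that the largest root $s_\be$ is selected for $\be>\be_c$. Second, the bookkeeping that turns the ratio of Hessian determinants into the precise constants $A$ and $B$ genuinely uses the value $s_{\be_c}$ and the relation $\be_c=\frac{2(q-1)\log(q-1)}{q-2}$. The regimes $\be\neq\be_c$ are comparatively soft, since there a single symmetry-class of maximizers dominates and the weights are then forced by permutation symmetry alone.
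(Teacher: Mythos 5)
Your computational skeleton coincides with the paper's: reduction of weak convergence to convergence on cylinders, Hubbard--Stratonovich linearization, a Laplace analysis of the ratio $\int e^{nG_\be}h\,dx\big/\int e^{nG_\be}\,dx$ (your $G_\be$ is exactly the paper's auxiliary function $\varphi_P$ after the rescaling $x=\be z$, minus the constant $\log q$), evaluation of the bounded prefactor at each maximizer to produce the Bernoulli measure with marginal $p^*$, identification of that Bernoulli measure with the DGM $\wt\mu^k_{\be s_\be}$ via the mean-field equation \eqref{sbeta}, and Hessian-determinant weights in the coexistence case $\be=\be_c$. All of those steps are correct and are the same steps the paper performs (its Lemma \ref{lemCWP} and Lemma \ref{lem-muk}).

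The genuine gap is the sentence ``By permutation symmetry the maximizers are the symmetric point $p^*=\frac1q\mathbf 1$ \dots and $q$ distinguished-axis points.'' Permutation symmetry only tells you that the \emph{set} of global maximizers is permutation-invariant; it does not tell you that every global maximizer lies on the one-favoured-coordinate family $\phi(s)$ up to permutation (critical points with two or more favoured coordinates, and critical points corresponding to other roots of \eqref{sbeta}, must be ruled out as global maxima), it does not produce the value $\be_c=\frac{2(q-1)\log(q-1)}{q-2}$, it does not show that the \emph{largest} root $s_\be$ is the one selected, and it does not give the coexistence statement that at $\be=\be_c$ exactly the $q+1$ points $\nu^0,\nu^1,\dots,\nu^q$ are simultaneously global maxima with $s_{\be_c}=\frac{q-2}{q-1}$. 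This classification is the hardest part of the whole proof, and it cannot be recovered by local or bifurcation analysis either, because the transition is first order: since $\mathrm{Hess}\,G_\be(\nu^0)=-\frac1\be I+\frac1q I-\frac{1}{q^2}\mathbf 1\mathbf 1^\top$ on the symmetric point, $\nu^0$ remains a strict local maximum for all $\be<q$, and $\be_c<q$; the global maximum therefore jumps away from $\nu^0$ at $\be_c$ while $\nu^0$ is still locally stable. The paper fills precisely this hole by quoting Theorem \ref{EllisWang} (Ellis--Wang, Theorem 2.1) together with their Proposition 2.2 (negative definiteness of the Hessians and their eigenvalues, which is also what makes your determinant computation for $A$ and $B$ legitimate). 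You must either cite that result, as the paper does, or supply the full scalar analysis; as written, the maximizer structure is asserted, not proved. A lesser remark: your anticipated obstacle of ``near-degeneracy in the $\mathbf 1$-direction'' is vacuous --- since $\pare{\diag(p)-pp^\top}\mathbf 1=0$, your own formula gives $\mathrm{Hess}\,G_\be(x)\,\mathbf 1=-\frac1\be\,\mathbf 1$, so that direction always carries the eigenvalue $-1/\be<0$, and at the relevant maximizers the Hessians are genuinely nondegenerate.
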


\begin{remark}
Actually $\munbe$ converges towards $\frac{1}{q}
\sum_{k=1}^q \wt{\mu}_{\beta s_\be}^{k}$ for every $\beta \neq\be_c$ since 
$s_\be=0$ for $\be<\be_c$, and it is clear that $\wt{\mu}^{k}_{0}=\rho^{\otimes\bb N}$
for each $1\leq k\leq q$.
\end{remark}

We refer to \cite{EllisWang90} for a discussion of the Curie-Weiss-Potts model and historical references.
Orey (\cite{Orey1988}, Theorem 4.4) mentions the weak convergence 
of $\mu_{n,\beta}$ towards an explicit atomic measure, but he makes a mistake concerning the case $\beta=\beta_c$, as pointed out in \cite{EllisWang90}.

\subsection{Plan of the paper}
The paper is composed as follows. 

In Section \ref{sec-prooforiginalsin} we prove Theorem \ref{theo-originalsin}, in Section \ref{sec-proofthcwp} we prove Theorem \ref{thmCWP}. Both proofs are based on the convergence of $\munbe(C)$ where $C$ is a cylinder in $\S$. 

We point out that in Theorem \ref{theo-originalsin} the proofs of the parts (3)-(4) and of the parts (1)-(2) are independent.

Theorem \ref{thmCW} is a simple consequence of Theorem \ref{theo-originalsin} as said above. 


\section{Proof of Theorem \ref{theo-originalsin}}\label{sec-prooforiginalsin}
 

\subsection{Convergence of \texorpdfstring{$\mu^\psi_{n,\be}$}{}}
To lighten the notations we drop the $\psi$ in $H^\psi_{n}$,
$\munbe^\psi$, $Z^\psi_{n,\beta}$.
To prove the weak convergence of $\munbe$ towards a measure $\mu$, it is enough to show that for every cylinder $C$,
\begin{equation}
\lim\limits_{n\to\8} \munbe(C)=\mu(C).
\end{equation}
Let $\omega=\om_{0}\ldots \om_{p-1}$ be a finite word of length $p$, let  
$n>p$. 
We use the equality 
$$e^{a^2}=\frac{1}{\sqrt{2\pi}}\int_{-\8}^{+\8}e^{-\frac{x^2}{2}+\sqrt 2 a x}\,dx,$$
sometimes called the Hubbard-Stratonovich transformation (\cite{Hubbard},\cite{Stratonovic}), to compute the following.

\begin{eqnarray}
Z_{n,\be}\munbe([\om])&=&\int_{\S_2} e^{\frac{\be}{2n}(S_{n}(\psi)(\al))^{2}}\BBone_{[\om]}(\al)d\bb{P}(\al)\nonumber\\
&=&\int_{\S_2}\frac{1}{\sqrt{2\pi}}\int_{-\8}^{+\8} e^{-\frac{x^2}{2}}
e^{\sqrt{\frac{\beta}{n}}xS_n(\psi)(\al)}\BBone_{[\om]}(\al)\,dx\,d\bb{P}(\al),\nonumber\\
&=& \sqrt\frac{\be n}{2\pi}\int_{-\8}^{+\8} e^{-n\frac{\be}{2}z^{2}}\int_{\S_2}
e^{\be z S_n(\psi)(\al)}\BBone_{[\om]}(\al)d\bb P(\al)\,dz,\label{eq1-cvmunbe}
\end{eqnarray}
where we have made the change of variable $\be z=\disp\sqrt\frac\be{n}x$. 

Let us define the Transfer operator $\CL_{\xi}$, depending on a real or complex parameter $\xi$, by
$$\CL_{\xi}(T)(x):=\sum_{y,\ \s(y)=x}e^{\xi\psi(y)}T(y).$$
Then for every $n\in\bb N$,
\begin{equation}\label{L-puissance-n}
\CL^n_{\xi}(T)(x)=\sum_{y,\ \s^n(y)=x}e^{\xi S_n\psi(y)}T(y).
\end{equation}
We remind the following properties for $\CL_{\xi}$ (see \cite{bowen}, \cite{parry-pollicott}): 
\begin{prop}
\label{rem-meas-confor}
The operator $\CL_{\xi}$ acts on continuous and H\"older continuous functions. On H\"older continuous functions, its spectral radius $\l_{\xi}$ is a simple dominating eigenvalue. We denote by $H_{\xi}$ a positive associated eigenfunction. The rest of the spectrum of $\CL_{\xi}$ is included into the disk of radius $\l_{\xi}e^{-\eps(\xi)}$ for some positive $\eps(\xi)$. 
The dual operator (for continuous functions) acts on measures. There exists a unique probability measure $\nu_{\xi}$ which is the eigen-measure for the eigen-value $\l_{\xi}$. The measure $\nu_{\xi}$ is the conformal measure. The DGM is then equal to
$$d\wt\mu_{\xi}=H_{\xi}d\nu_{\xi},$$
where $H_{\xi}$ is normalized such that $\wt\mu_\xi$ is a probability measure.  The pressure is $\log\l_{\xi}$. 
\end{prop}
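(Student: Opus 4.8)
The plan is to recognize Proposition \ref{rem-meas-confor} as the Ruelle--Perron--Frobenius (RPF) theorem for the transfer operator of a Hölder potential on the full shift, and to prove it along the classical route: first produce the dominant eigendata, then establish a spectral gap, and finally identify the eigenvalue with the pressure. I would begin with the elementary bookkeeping. Since every $x\in\S_2$ has exactly $\Card\L$ preimages under $\s$, each inverse branch contracting the metric by the factor $1/2$, the formula $\CL_\xi(T)(x)=\sum_{\s(y)=x}e^{\xi\psi(y)}T(y)$ visibly maps $\CC(\S_2)$ into itself; and because $\psi$ is Hölder, it maps the space $\CH^\al$ of $\al$-Hölder functions into itself, with a Lasota--Yorke bound $\nor{\CL_\xi T}_\al\le A\nor{T}_\al+B\nor{T}_\infty$ in which the geometric contraction of the branches beats the Hölder modulus, so that $A<1$ after finitely many iterations. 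This establishes the first assertion and, through Hennion's quasi-compactness criterion, that $\CL_\xi$ is quasi-compact on $\CH^\al$.

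Next I would exhibit the eigendata. For the conformal measure I would consider the normalized dual action $m\mapsto\CL_\xi^{*}m/(\CL_\xi^{*}m)(\1)$ on the weak-$*$ compact convex set of Borel probabilities on $\S_2$; Schauder--Tychonoff yields a fixed point $\nu_\xi$ obeying $\CL_\xi^{*}\nu_\xi=\l_\xi\nu_\xi$ with $\l_\xi=\int\CL_\xi\1\,d\nu_\xi>0$, which is exactly the defining relation \eqref{eq-def-mesconform}. For the eigenfunction I would use the Hölder estimate together with the bounded-distortion control $\abs{S_n\psi(y)-S_n\psi(y')}\le C$ over an $n$-cylinder to see that $\l_\xi^{-n}\CL_\xi^{n}\1$ is uniformly bounded and equicontinuous, extract a limit point $H_\xi$ by Arzelà--Ascoli, and check that it is strictly positive, Hölder, and satisfies $\CL_\xi H_\xi=\l_\xi H_\xi$. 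Normalizing $\int H_\xi\,d\nu_\xi=1$, the measure $d\wt\mu_\xi=H_\xi\,d\nu_\xi$ is then a $\s$-invariant probability.

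The substantive step, and the one I expect to be the main obstacle, is the spectral gap: that $\l_\xi$ is simple and dominates the rest of the spectrum by a definite factor $e^{-\eps(\xi)}$, and that $\nu_\xi$ is unique. Here I would pass to the normalized operator $\wh\CL_\xi T:=\l_\xi^{-1}H_\xi^{-1}\CL_\xi(H_\xi T)$, which is a genuine transfer operator with $\wh\CL_\xi\1=\1$ whose dual fixes $\wt\mu_\xi$, and prove that it strictly contracts a Birkhoff cone of positive Hölder functions in the associated Hilbert projective metric. The uniform contraction of the inverse branches and the Hölder control of $\psi$ force the Birkhoff contraction coefficient to be strictly below $1$, yielding exponentially fast convergence $\wh\CL_\xi^{n}T\to\int T\,d\wt\mu_\xi$ on $\CH^\al$; this is what simultaneously gives simplicity of $\l_\xi$, uniqueness of the conformal measure, the bound $\l_\xi e^{-\eps(\xi)}$ on the remaining spectrum, and the identity $d\wt\mu_\xi=H_\xi\,d\nu_\xi$ for the DGM. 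A purely soft compactness argument does not suffice here; one genuinely needs the contraction (or an equivalent coupling) estimate.

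Finally I would identify $\log\l_\xi$ with $\CP(\xi\psi)$. The eigen-relations give the Gibbs bounds \eqref{eq-def-gibbs-ergo} for $\wt\mu_\xi$, from which the variational principle yields $h_{\wt\mu_\xi}+\xi\int\psi\,d\wt\mu_\xi=\log\l_\xi=\CP(\xi\psi)$, and the gap upgrades this to uniqueness of the equilibrium state. As a by-product needed in the main argument, $\xi\mapsto\CL_\xi$ is an entire operator-valued map, so Kato's analytic perturbation theory makes the simple isolated eigenvalue $\l_\xi$, hence $\CP(\xi\psi)=\log\l_\xi$, depend real-analytically on $\xi$.
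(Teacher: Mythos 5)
Your proposal is correct, but there is nothing in the paper to compare it against line by line: the statement is recalled as ``Facts'' and the paper's entire justification is a citation of \cite{bowen} and \cite{parry-pollicott}. What you have written is essentially the classical Ruelle--Perron--Frobenius proof contained in those references: Schauder--Tychonoff applied to $m\mapsto\CL_{\xi}^{*}m/(\CL_{\xi}^{*}m)(\1)$ for the eigenmeasure, bounded distortion and Arzel\`a--Ascoli for the eigenfunction, and the Gibbs estimate \eqref{eq-def-gibbs-ergo} together with the variational principle to identify $\log\l_{\xi}$ with $\CP(\xi\psi)$. The one step where you genuinely depart from the cited texts is the spectral gap: Bowen and Parry--Pollicott derive quasi-compactness from a Doeblin--Fortet (Lasota--Yorke) inequality and then rule out peripheral spectrum, whereas you contract a Birkhoff cone of positive H\"older functions in the Hilbert projective metric; both routes are standard, and the cone method buys explicit, constructive rates for $\eps(\xi)$, which fits well with the paper's later appeal to lower semi-continuity of the spectral gap via \cite{hennion-herve}. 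Two details to tighten if you write this out in full: a subsequential Arzel\`a--Ascoli limit of $\l_{\xi}^{-n}\CL_{\xi}^{n}\1$ need not satisfy $\CL_{\xi}H_{\xi}=\l_{\xi}H_{\xi}$ on the nose, so one should pass to Ces\`aro averages $\frac1N\sum_{n<N}\l_{\xi}^{-n}\CL_{\xi}^{n}\1$ (or use the Gibbs bounds for $\nu_{\xi}$) to close that step; and the identification of $\nu_{\xi}$ with the conformal measure in the normalized form \eqref{eq-def-mesconform} uses $\CP(\xi\psi)=\log\l_{\xi}$, so logically it belongs after, not before, the pressure identification.
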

Therefore we have 
\begin{equation}
\label{equ-clnom}
\CL^{n}_{\xi}(\BBone_{[\om]})(x)=\l_{\xi}^{n}\nu_{\xi}([\om])H_{\xi}(x)+\l_{\xi}^{n}e^{-n\eps(\xi)}T(n,\xi)(x)
\end{equation}
with $||T(n,\xi)||_{\8}\le 1$. 

For $\al\in {\S_2}$ one writes $\al=\bar\al\theta$ with $\bar\al$ equal to the suffix of length $n$ of $\al$ and $\theta$ in ${\S_2}$. 
Using \eqref{L-puissance-n} we can rewrite \eqref{eq1-cvmunbe} as
\begin{eqnarray}
Z_{n,\be}\munbe([\om])&=& \frac1{2^{n}}\sqrt\frac{\be n}{2\pi}\int_{-\8}^{+\8} e^{-n\frac{\be}{2}z^{2}}\int_{\S_2}
\sum_{\bar\al} e^{\be z S_n(\psi)(\bar\al\theta)}\BBone_{[\om]}(\bar\al)d\bb P(\theta)\,dz,\nonumber\\
&=& \frac1{2^{n}}\sqrt\frac{\be n}{2\pi}\int_{-\8}^{+\8} e^{-n\frac{\be}{2}z^{2}}\int_{\S_2} \CL^{n}_{\be z}(\uncom)(\theta)d\bb P(\theta)\,dz.\label{eq2-cvmunbe}
\end{eqnarray}
The normalization factor $Z_{n,\be}$ can be computed by replacing $[\om]$ by ${\S_2}$, and we get
\begin{equation}
\label{quotient}
\munbe([\om])=\dfrac{\int_{-\8}^{+\8} e^{-n\frac{\be}{2}z^{2}}\int_{\S_2} \CL^{n}_{\be z}(\uncom)(\theta)d\bb P(\theta)\,dz}
{\int_{-\8}^{+\8} e^{-n\frac{\be}{2}z^{2}}\int_{\S_2} \CL^{n}_{\be z}(\1)(\theta)d\bb P(\theta)\,dz}=:\dfrac{N_{n,\be}}{D_{n,\be}}.
\end{equation}
Using \eqref{equ-clnom} we get
\begin{multline}\label{eq3-cvmunbe}
N_{n,\be}= 
\int_{-\8}^{+\8} e^{-n\frac{\be}{2}z^{2}+n\log\l_{\be z}}\\
\left[\int_{\S_2} \nu_{\be z}([\om])H_{\be z}(\theta)+e^{-n\eps(\be z)}T(n,\be z)(\theta)d\bb P(\theta)\right]\,dz.
\end{multline}
We want to use the Laplace method but the last term in the inner integral depends on $n$. This term converges to zero as $n$ goes to infinity but the speed of convergence depends on $z$ and $|z|$ may go to infinity.
Setting $A:=\|\psi\|_\infty$, we deduce from \eqref{L-puissance-n} that for every $n$, every $\xi$ and every $T$ continuous
\begin{equation}
\label{eq4-cvmunbe}
||\CL_{\xi}^{n}(T)||_{\8}\le 2^{n}e^{n\xi A}||T||_{\8}. 
\end{equation}
Therefore the term in the integral defining the numerator $N_{n,\be}$
in \eqref{quotient} is bounded from above by 
$\disp e^{-n\frac\be2z^{2}+n\log 2+n\be zA}$. Furthermore, there exists $Z(\be)$ such that for $|z|>Z(\be)$
\begin{equation}\label{maj}
-\frac\be2z^{2}+\log 2+\be zA\le-\frac\be4z^{2}
\end{equation}
holds, from which we deduce that there exists $\kappa(\be)>0$ such that for every $n> p$,
\begin{equation}\label{compact}
\int_{|z|\geq \kappa(\be)}e^{-n\frac{\be}{2}z^{2}}
\int_{\S_2} \CL^{n}_{\be z}(\uncom)(\theta)d\bb P(\theta)\,dz
\leq e^{-n\kappa(\be)}.
\end{equation}

From this we claim that the computation of the integral in \eqref{eq3-cvmunbe} can be done for $z$ in the compact set $[-Z(\be),Z(\be)]$ instead of $\bb R$. As the spectral gap $\xi\mapsto\eps(\xi)$ is lower semi-continuous (see \cite{hennion-herve}), the map $z\mapsto \eps(\be z)$ attains its infimum on $[-Z(\be),Z(\be)]$
so that $\int_{\S_2} e^{-n\eps(\be z)}T(n,\be z)(\theta)d\bb P(\theta)$
converges uniformly to zero on $[-Z(\be),Z(\be)]$.
This yields that one can use the Laplace method for the convergence in   \eqref{eq3-cvmunbe}, as we now explain.

The Laplace method shows that if $\varphi:I\to\bb R$
is a twice continuously differentiable function, if $\varphi'$ vanishes on a single point $\xi$ in the interior of the interval $I$, with $\varphi''(\xi)<0$, and if $f:I\to \bb R$ is continuous with $f(\xi)\neq 0$, then 
\begin{equation}
\label{eq-laplacemethod}
\int_I e^{n\varphi(y)} f(y) dy\,\underset{n\to\infty}{\sim}\,
\frac{\sqrt{2\pi}}{\sqrt{|\varphi''(\xi)|}}e^{n\varphi(\xi)}f(\xi)n^{-1/2},
\end{equation}
where by $u_{n}\underset{n\to\infty}{\sim}v_{n}$ we mean that $u_{n}=v_{n}(1+\epsilon(n))$ with $\lim_{\ninf}\epsilon(n)=0$. 
We refer to \cite{Erdelyi} for a report about the Laplace method, and for a generalization to the case where the least integer $k$ such that 
$\varphi^{(k)}(\xi)\neq 0$ is greater than two.
Of course when $\varphi$ has a finite number of maxima we may break up the integral into a finite number of integrals so that in each integral $\varphi$ reaches its maximum at only one interior point.

In our case we claim that the function $\disp \varphi_{\textrm{OS}}:z\mapsto -\frac\be2z^{2}+\log\l_{\be z}$ admits only finitely many maxima. Indeed, for $z\notin [-Z(\be),Z(\be)]$, $\varphi(z)<-\frac\be4z^{2}<0$ (this is a consequence of \eqref{eq4-cvmunbe} and \eqref{maj}) and $\varphi_{\textrm{OS}}(0)=\log2>0$. Therefore, the maxima for $\varphi_{\textrm{OS}}$ must be in the compact interval $[-Z(\be),Z(\be)]$ . If there are infinitely many, there must be some accumulation point. As $\varphi_{\textrm{OS}}$ is analytic in some complex neighborhood of $[-Z(\be),Z(\be)]$, it must be equal to the constant function, which is clearly not the case. 
Let $t_1,\cdots,t_J$ the points where $\varphi_{\textrm{OS}}$ attains its maximum.
We write the integral \eqref{eq3-cvmunbe} over the segment $[-Z(\be),Z(\be)]$ as
a finite sum of integrals over segments $[a_j,b_j]$ where each segment $[a_j,b_j]$ contains exactly one of the points $t_j$, $1\leq j\leq J$.

We state the following lemma, which is an immediate adaptation of the Laplace method. 
\begin{lemma}\label{Laplace-uniforme}
Let $\varphi: [a,b]\to\bb R$ a function of class $C^2$, with $\varphi'$ vanishing on a single point $c$ in $]a,b[$ and $\varphi''(c)<0$. Let $(f_n)_{n\geq 1}$, $f$
some continuous functions from $[a,b]$ to $\bb R$ such that $f_n$ converges to $f$ uniformly on $[a,b]$, and $f(c)\neq 0$. 
Then as $n\to\infty$
$$\int_a^b e^{n\varphi(x)}f_n(x)\,dx\sim \sqrt{\frac{\pi}{2|\varphi''(c)|}}e^{n\varphi(c)}f(c)n^{-1/2}.$$
\end{lemma}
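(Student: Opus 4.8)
The plan is to reduce the statement to the classical Laplace method \eqref{eq-laplacemethod}, treating the $n$-dependence of the amplitude as a perturbation that is negligible thanks to the uniform convergence $f_n\to f$. First I would record that the hypotheses force $c$ to be the \emph{strict global} maximum of $\varphi$ on $[a,b]$: since $\varphi'$ has no zero other than $c$ in $]a,b[$ and $\varphi''(c)<0$, the sign of $\varphi'$ is $+$ on $]a,c[$ and $-$ on $]c,b[$, so $\varphi$ increases then decreases and $\varphi(x)<\varphi(c)$ for every $x\neq c$. This is exactly the configuration in which \eqref{eq-laplacemethod} applies.

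Next I would split the integral linearly into a main part with the limiting amplitude and an error part:
\[
\int_a^b e^{n\varphi(x)}f_n(x)\,dx = \int_a^b e^{n\varphi(x)}f(x)\,dx + \int_a^b e^{n\varphi(x)}\bigl(f_n(x)-f(x)\bigr)\,dx.
\]
The first integral is handled directly by \eqref{eq-laplacemethod} applied to the continuous amplitude $f$ with $f(c)\neq 0$, yielding the announced leading asymptotic proportional to $e^{n\varphi(c)}f(c)n^{-1/2}$. For the error integral I would use the crude sup-norm bound
\[
\left|\int_a^b e^{n\varphi(x)}\bigl(f_n(x)-f(x)\bigr)\,dx\right| \le \|f_n-f\|_\infty \int_a^b e^{n\varphi(x)}\,dx,
\]
and apply \eqref{eq-laplacemethod} once more, this time with the constant amplitude $1$ (again nonzero at $c$), which gives $\int_a^b e^{n\varphi(x)}\,dx \sim C\,e^{n\varphi(c)}n^{-1/2}$ for some $C>0$. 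Hence the error integral is $O\bigl(\|f_n-f\|_\infty\,e^{n\varphi(c)}n^{-1/2}\bigr)$. Note that the $f_n$, being continuous on a compact interval and uniformly convergent, are uniformly bounded, so no integrability issue arises.

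It then remains to compare the two pieces. Since $f(c)\neq 0$, the main integral is of \emph{exact} order $e^{n\varphi(c)}n^{-1/2}$, whereas the error term is of this same order multiplied by $\|f_n-f\|_\infty\to 0$; thus the error is $o\bigl(e^{n\varphi(c)}n^{-1/2}\bigr)$ and gets absorbed into the factor $(1+\epsilon(n))$ of the equivalence relation. The sum is therefore asymptotic to the main integral, which is the claimed equivalent. There is no genuine obstacle here: the only point requiring care is that the error must be measured against the main term rather than merely shown to vanish, which is exactly why the hypothesis $f(c)\neq 0$ matters (it prevents the main term from decaying faster than $e^{n\varphi(c)}n^{-1/2}$) and why \emph{uniform} convergence, rather than pointwise convergence, is what legitimizes pulling $\|f_n-f\|_\infty$ out of the error integral.
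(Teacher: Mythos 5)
Your strategy --- reduce to the classical Laplace asymptotics \eqref{eq-laplacemethod} by writing $\int_a^b e^{n\varphi}f_n\,dx=\int_a^b e^{n\varphi}f\,dx+\int_a^b e^{n\varphi}(f_n-f)\,dx$, bounding the second integral by $\|f_n-f\|_\infty\int_a^b e^{n\varphi(x)}\,dx$, and invoking \eqref{eq-laplacemethod} once more with amplitude $1$ --- is sound and complete, and it is precisely the ``immediate adaptation of the Laplace method'' that the paper invokes without giving any proof. Your preliminary observation that the hypotheses force $\varphi(x)<\varphi(c)$ for every $x\neq c$ (sign of $\varphi'$ on each side of $c$) is also the right, and needed, first step, and measuring the error against the main term rather than merely showing it vanishes is exactly the correct point of care.

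There is, however, one issue you slide past with the words ``proportional to''. What your argument actually proves is $\int_a^b e^{n\varphi(x)}f_n(x)\,dx\sim\sqrt{2\pi/|\varphi''(c)|}\,e^{n\varphi(c)}f(c)\,n^{-1/2}$, since that is the constant appearing in \eqref{eq-laplacemethod} for an interior nondegenerate maximum. The lemma asserts instead the constant $\sqrt{\pi/(2|\varphi''(c)|)}$, which is exactly half as large (note $\sqrt{2\pi}=2\sqrt{\pi/2}$); that smaller constant is what one gets for a maximum located at an endpoint, where only half of the Gaussian contributes, whereas here $c\in\,]a,b[$. So your closing claim that the main integral ``is the claimed equivalent'' is not literally true: your proof establishes the statement with the constant of \eqref{eq-laplacemethod}, which contradicts the constant printed in the lemma. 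This is in fact an inconsistency internal to the paper itself, and it is harmless for Theorem \ref{theo-originalsin} because the constant appears in both the numerator and the denominator of \eqref{quotient} and cancels in the coefficients \eqref{equ1-cj}; but a blind proof of the lemma as stated should have detected the mismatch and said explicitly which constant it proves, rather than absorbing a factor of $2$ into the word ``proportional''.
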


We apply this lemma on every $[a_j,b_j]$ to the functions $f_n$ defined by
$$f_n(z)=\int_{\S_2} \nu_{\be z}([\om])H_{\be z}(\theta)+e^{-n\eps(\be z)}T(n,\be z)(\theta)d\bb P(\theta).$$
The functions $f_{n}$ converge uniformly on $[a_j,b_j]$ to $f$ defined by
$$f(z)=\left(\int_{\S_2} H_{\be z}(\theta)d\bb P(\theta)\right)\nu_{\be z}([\om]).$$
Putting together \eqref{compact} and the result of Lemma \ref{Laplace-uniforme}
applied to every $[a_j,b_j]$, assuming for the moment that 
$\varphi''_{\textrm{OS}}(t_j)<0$ for every $j=1,\cdots,J$, we obtain that 
$N_{n,\be}$ is equivalent when $n$ goes to infinity to 
$$\sqrt{\frac{\pi}{2n}}e^{n\varphi_{\textrm{OS}}(t_1)}
\sum_{j=1}^J\dfrac{\disp\left(\int_{\S_2} H_{\be t_j}(\theta)d\bb P(\theta)\right)\nu_{\be t_j}([\om])}{\sqrt{|\varphi''_{\textrm{OS}}(t_j)|}}$$
and $D_{n,\be}$ is equivalent to 
$$\sqrt{\frac{\pi}{2n}}e^{n\varphi_{\textrm{OS}}(t_1)}
\sum_{j=1}^J\dfrac{\left(\disp\int_{\S_2} H_{\be t_j}(\theta)d\bb P(\theta)\right)}{\sqrt{|\varphi''_{\textrm{OS}}(t_j)|}}.$$
Recalling \eqref{quotient} we get that
$\munbe([\om])$ converges to 
\begin{equation}\label{combi}
\sum_{j=1}^J c_{j}\nu_{\be t_{j}}([\om]),
\end{equation}
where
\begin{equation}
\label{equ1-cj}
c_{j}:=\dfrac{\disp
\frac{\disp\int_{\S_2} H_{\be t_{j}}d\bb P}{\sqrt{|\varphi''_{\textrm{OS}}(t_j)|}}}
{\disp\sum_{i=1}^J \frac{\disp\int_{\S_2} H_{\be t_{i}}d\bb P}
{\sqrt{|\varphi''_{\textrm{OS}}(t_i)|}}}.
\end{equation}

If $\varphi''_{\textrm{OS}}(t_i)=0$ then the contribution of the integral over $[a_i,b_i]$ is of order $e^{n\varphi_{\textrm{OS}}(t_{i})}n^{-1/{k_{i}}}$ where
$k_{i}$ is the least integer such that $\varphi^{(k_{i})}_{\textrm{OS}}(t_i)<0$. Note that all $\varphi_{\textrm{OS}}(t_{i})$ are equal and the $k_{i}$'s are all even numbers because $\varphi_{\textrm{OS}}$ reaches its maximum at each $t_{i}$.

Let $K:=\max{k_{i}}$ and let $\CI$ be the set of indexes $i$'s such that $k_{i}=K$. 
Then
we still get the convergence of $\munbe([\om])$
to a convex combination \eqref{combi} of measures $\nu_{\be t_j}$'s,
but with $c_j=0$ whenever $j\notin \CI$ and 
\begin{equation}
\label{equ2-cj}
c_{j}:=\dfrac{\disp
\frac{\disp\int_{\S_2} H_{\be t_{j}}d\bb P}{|\varphi^{(K)}_{\textrm{OS}}(t_j)|^{1/K}}}
{\disp\sum_{i\in\CI}^J \frac{\disp\int_{\S_2} H_{\be t_{i}}d\bb P}
{|\varphi^{(K)}_{\textrm{OS}}(t_i)|^{1/K}}}
\end{equation}
for $j\in\CI$. This finishes the proof of part (4) of Theorem \ref{theo-originalsin}. 


\subsection{Measures maximizing the quadratic pressure}
We want to determine the invariant measures $m$ which maximize 
$$h_{m}+\frac\be2\left(\int_{\S_2}\psi\,d m\right)^{2}.$$
We set $\ol A$ (resp. $\ul A$) for $\disp \max_{m\ \s-inv}\int_{\S_2} \psi\,d m$ (resp. $\disp \min_{m\ \s-inv}\int_{\S_2} \psi\,d m$). 
For $z\in\bb R$, we set 
$$\ol H(z):=\left\{\begin{aligned}
&\max_{m\ \s-inv}\left\{h_{m},\,\int_{\S_2}\psi\,dm=z\right\}\textrm{ if }z\in [\ul A,\ol A],\\
&-\infty \textrm{ if not }.\end{aligned}\right.$$
We point out the equality 
$$\CP_{2}(\be\psi):=\max_{m}\left\{h_{m}+\frac\be2\left(\int_{\S_2}\psi\,dm\right)^{2}\right\}=\max_{z\in[\ul A,\ol A]}\left\{\ol H(z)+\frac\be2z^{2}\right\}.$$

Let us set \label{eq-deffibar}$\disp\fibar(z):=\ol H(z)+\frac\be2z^{2}$. 
We claim that the maxima of $\varphi_{\textrm{OS}}$ and $\fibar$ are the same.
First we observe that
$$\begin{aligned}
\CP(t\psi):=\max_{m}\left\{h_{m}+t \int_{\S_2}\psi\,dm\right\}
&=\max_{z\in\bb R}\left\{\ol H(z)+t z\right\}\\
&=\max_{z\in\bb R}\left\{tz-(-\ol H(z))\right\}.\end{aligned}$$
As the function $-\ol H$ is convex lower semi-continuous, we deduce from the duality property of the Fenchel-Legendre transform (see for instance \cite{DemboZeitouni}, Lemma 4.5.8) that 
\begin{equation}\label{prop-minmax}
\ol H(z)=\inf_{t\in \R}\left\{\CP(t\psi)-tz\right\}.
\end{equation}
\begin{lemma}
\label{lem-fibarlefi}
For every $z$ in $[\ul A,\ol A]$, 
$\disp \fibar(z)\le \varphi_{\textrm{OS}}(z)$.
\end{lemma}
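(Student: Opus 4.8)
The plan is to read off the inequality directly from the Fenchel--Legendre duality formula \eqref{prop-minmax}, which has just been established. Recall that by definition $\fibar(z)=\ol H(z)+\frac\be2 z^{2}$, while the auxiliary function evaluated at $z$ reads
$$\varphi_{\textrm{OS}}(z)=\CP(\be z\psi)-\frac\be2 z^{2}.$$
So the claimed inequality $\fibar(z)\le\varphi_{\textrm{OS}}(z)$ is equivalent, after cancelling the common term $\frac\be2 z^{2}$ appearing on both sides and rearranging, to the single estimate
$$\ol H(z)\le \CP(\be z\psi)-\be z^{2}.$$

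The whole point is that \eqref{prop-minmax} presents $\ol H(z)$ as an \emph{infimum} over the real parameter $t$,
$$\ol H(z)=\inf_{t\in\R}\left\{\CP(t\psi)-tz\right\},$$
so any particular admissible value of $t$ furnishes an upper bound for $\ol H(z)$. The natural choice dictated by the form of $\varphi_{\textrm{OS}}$ is $t=\be z$: with this substitution the term $\CP(t\psi)$ becomes exactly $\CP(\be z\psi)$ and the linear term $tz$ becomes $\be z^{2}$. Plugging $t=\be z$ into the infimum therefore gives precisely
$$\ol H(z)\le \CP(\be z\psi)-\be z\cdot z=\CP(\be z\psi)-\be z^{2},$$
which is the estimate isolated above. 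Adding back $\frac\be2 z^{2}$ to both sides then yields $\fibar(z)\le\varphi_{\textrm{OS}}(z)$, completing the argument.

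I do not anticipate a genuine obstacle here: once \eqref{prop-minmax} is in hand the lemma is a one-line consequence of testing the infimum at a single cleverly chosen parameter. The only thing worth stressing is why $t=\be z$ is the right test value rather than an arbitrary one; it is forced by the requirement that the $\CP$-argument match $\varphi_{\textrm{OS}}(z)=\CP(\be z\psi)-\frac\be2z^{2}$, and this is exactly the feature that makes the maxima of $\fibar$ and $\varphi_{\textrm{OS}}$ comparable. The restriction $z\in[\ul A,\ol A]$ is needed only to guarantee that $\ol H(z)$ is finite (it is $-\infty$ outside this interval), so that the inequality is meaningful rather than vacuous.
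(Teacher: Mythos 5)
Your proof is correct and is essentially identical to the paper's: both test the infimum in \eqref{prop-minmax} at the single value $t=\be z$ and then rearrange, since $\CP(\be z\psi)-\be z^{2}+\frac\be2 z^{2}=\CP(\be z\psi)-\frac\be2 z^{2}=\varphi_{\textrm{OS}}(z)$. Nothing to add.
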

\begin{proof}
Let $t=\be z$. Using \eqref{prop-minmax} we get 
$$\fibar(z)\le \CP(t\psi)-tz+\frac\be2z^{2}=\CP(\be z\psi)-\frac\be2z^{2}=\varphi_{\textrm{OS}}(z).$$
\end{proof}

\begin{lemma}
\label{lem-maxfifibar}
$\fibar(z)$ is maximal if and only if $\varphi_{\textrm{OS}}(z)$ is maximal. In that case, $\fibar(z)=\varphi_{\textrm{OS}}(z)$. 
\end{lemma}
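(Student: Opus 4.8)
The plan is to combine the pointwise inequality of Lemma \ref{lem-fibarlefi} with the observation that equality $\fibar(z)=\varphi_{\textrm{OS}}(z)$ holds precisely at the critical points of $\varphi_{\textrm{OS}}$, and in particular at its maximizers. Writing $P(t):=\CP(t\psi)$, I have $\varphi_{\textrm{OS}}(z)=P(\be z)-\frac\be2 z^2$ while, by the Fenchel--Legendre duality \eqref{prop-minmax}, $\fibar(z)=\inf_{t\in\R}\{P(t)-tz\}+\frac\be2 z^2$. Inspecting the proof of Lemma \ref{lem-fibarlefi}, one sees that $\fibar(z)=\varphi_{\textrm{OS}}(z)$ if and only if the infimum defining $\ol H(z)$ is attained at $t=\be z$; so the whole point is to exhibit this attainment at maximizers.

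First I would record the preliminaries. The function $\varphi_{\textrm{OS}}$ attains its maximum at interior points of $\R$, since it tends to $-\infty$ at $\pm\infty$ by \eqref{eq4-cvmunbe}--\eqref{maj}. Moreover, as $\psi$ is H\"older, $P$ is real-analytic with $P'(t)=\int_{\S_2}\psi\,d\wt\mu_{t\psi}$. Hence any maximizer $z_0$ of $\varphi_{\textrm{OS}}$ satisfies $\varphi_{\textrm{OS}}'(z_0)=\be(P'(\be z_0)-z_0)=0$, that is $P'(\be z_0)=z_0$. Note this already forces $z_0=\int_{\S_2}\psi\,d\wt\mu_{\be z_0\psi}\in[\ul A,\ol A]$, so Lemma \ref{lem-fibarlefi} is applicable at $z_0$.

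The heart of the argument is then the following: the map $t\mapsto P(t)-tz_0$ is convex (because $P$ is convex), and its derivative $P'(t)-z_0$ vanishes at $t=\be z_0$ by the critical point equation just found; therefore its infimum is attained there, giving $\ol H(z_0)=P(\be z_0)-\be z_0^2$ and hence $\fibar(z_0)=\varphi_{\textrm{OS}}(z_0)$. Combining this with Lemma \ref{lem-fibarlefi} yields $\max_z\fibar(z)\le\max_z\varphi_{\textrm{OS}}(z)=\varphi_{\textrm{OS}}(z_0)=\fibar(z_0)\le\max_z\fibar(z)$, so the two maxima coincide; call the common value $M$.

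Finally I would deduce the stated equivalence. If $\varphi_{\textrm{OS}}(z)=M$, then $z$ is a maximizer, hence a critical point, so by the middle step $\fibar(z)=\varphi_{\textrm{OS}}(z)=M=\max_z\fibar(z)$, i.e. $z$ maximizes $\fibar$. Conversely, if $\fibar(z)=M=\max_z\fibar(z)$, then $\fibar(z)\le\varphi_{\textrm{OS}}(z)\le M$ forces $\varphi_{\textrm{OS}}(z)=M$, so $z$ maximizes $\varphi_{\textrm{OS}}$ and $\fibar(z)=\varphi_{\textrm{OS}}(z)$. The main obstacle is the middle step, namely identifying that the Fenchel infimum defining $\ol H(z_0)$ is attained exactly at $t=\be z_0$; this is precisely where analyticity (hence differentiability) of the pressure and convexity of $P$ enter, and everything else is bookkeeping around the pointwise bound $\fibar\le\varphi_{\textrm{OS}}$.
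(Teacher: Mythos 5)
Your proof is correct, and while the outer structure (the two sandwich arguments built on Lemma \ref{lem-fibarlefi}, plus existence of maximizers of $\varphi_{\textrm{OS}}$) coincides with the paper's, the key step — equality $\fibar(z)=\varphi_{\textrm{OS}}(z)$ at a maximizer — is established by a genuinely different mechanism. The paper does not return to the duality formula \eqref{prop-minmax} at that point: from the critical-point equation $\CP'(\be z)=z$ it deduces $\int_{\S_2}\psi\,d\wt\mu_{\be z}=z$, so the equilibrium state $\wt\mu_{\be z}$ is an admissible competitor in the supremum defining $\ol H(z)$; this gives $\fibar(z)\ge h_{\wt\mu_{\be z}}+\frac\be2 z^{2}$, which after rewriting $h_{\wt\mu_{\be z}}+\be z\int_{\S_2}\psi\,d\wt\mu_{\be z}=\CP(\be z\psi)$ collapses to $\varphi_{\textrm{OS}}(z)$, and Lemma \ref{lem-fibarlefi} closes the loop. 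You instead argue purely in convex-analytic terms: $t\mapsto\CP(t\psi)-tz_0$ is convex and differentiable with vanishing derivative at $t=\be z_0$, so the infimum in \eqref{prop-minmax} is attained there, which evaluates $\ol H(z_0)$ exactly. Both arguments are sound and both use differentiability of the pressure. What the paper's route buys is the explicit identification of $\wt\mu_{\be z}$ as a measure realizing $\ol H(z)$, which is precisely what is exploited right after the lemma to prove part (2) of Theorem \ref{theo-originalsin} (namely $m=\wt{\mu}_{\beta t_i\psi}$); your route never names a maximizing measure, so that identification would still need a separate argument. What your route buys is a cleaner exact computation of $\ol H$ at critical points with no entropy bookkeeping, at the cost of invoking the nontrivial direction of the Fenchel--Legendre duality a second time, whereas the paper's key step needs \eqref{prop-minmax} only through Lemma \ref{lem-fibarlefi}.
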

\begin{proof}
Let $z$ be a maximum for $\varphi_{\textrm{OS}}$. Then, it is a critical point for $\varphi_{\textrm{OS}}$. This yields
$$\be\CP'(\be z)=\be z.$$
In other words, $\disp \int_{\S_2}\psi\,d\wt\mu_{\be z}=z$ because $\disp \CP'(t\psi)=\int_{\S_2}\psi\,d\wt\mu_{t}$ holds for every $t$. 
Then,
\begin{eqnarray*}
\fibar(z)\ge h_{\wt\mu_{\be z}}+\frac\be2z^{2}&=&h_{\wt\mu_{\be z}}+\be z^{2}-\frac\be2z^{2}\\
&=&h_{\wt\mu_{\be z}}+\be z\int_{\S_2}\psi\,d\wt\mu_{\be z}-\frac\be2z^{2}\\
&=& \CP(\be z\psi)-\frac\be2z^{2}=\varphi_{\textrm{OS}}(z)\ge \fibar(z). 
\end{eqnarray*}
This means that $\fibar(z)=\varphi_{\textrm{OS}}(z)$ holds. On the other hand  for any $z'$,
$$\fibar(z')\le \varphi_{\textrm{OS}}(z')\le \varphi_{\textrm{OS}}(z)=\fibar(z),$$
which shows that $z$ is also a maximum for $\fibar$. 

Conversely, if $z$ is a maximum for $\fibar$, let $z'$ be any maximum for $\varphi_{\textrm{OS}}$. We get 
$$\fibar(z)\ge \fibar(z')=\varphi_{\textrm{OS}}(z')\ge \varphi_{\textrm{OS}}(z)\ge\fibar(z).$$
This shows that $z$ is also a maximum for $\varphi_{\textrm{OS}}$. 
\end{proof}
Now we are ready to finish the proof of Theorem \ref{theo-originalsin}.
Indeed let $m$ maximizing
$$h_{m}+\frac\be2\left(\int_{\S_2}\psi\,d m\right)^{2}.$$
Then $z:=\ds\int_{\S_2}\psi\,dm$ is a maximum for $\fibar$, hence according to Lemma
\ref{lem-maxfifibar} $z$ is a maximum for 
$\varphi_{\textrm{OS}}$ with $\varphi_{\textrm{OS}}(z)=\fibar(z)$.
Therefore there exists $i\in\interventier{1}{J}$ such that $z=t_i$,
and
$$h_m+\frac{\be}{2} t_i^2=\cal P(\be t_i \psi)-\frac{\be}{2} t_i^2.$$
We deduce that
$$h_m+\be t_i^2=\cal P(\be t_i \psi)=h_m+\be t_i \ds\int_{\S_2}\psi\,dm,$$
which implies that $m=\wt{\mu}_{\beta t_i\psi}$.
It remains to prove that each $\wt{\mu}_{\beta t_i\psi}$ does maximize
$$h_{m}+\frac\be2\left(\int_{\S_2}\psi\,d m\right)^{2}.$$
But this is immediate since
$$\cal P'(\be t_i)=\int_{\S_2}\psi\,d\wt{\mu}_{\beta t_i\psi}=t_i$$
and $t_i$ is a maximum for $\fibar$.

\section{Proof of Theorem \ref{thmCWP}} \label{sec-proofthcwp}

In a first step we use an auxiliary function $\varphi_{P}$. Note that this function was already studied by Ellis and Wang in \cite{EllisWang90}. Then we deduce that $\munbe(C)$ converge for any cylinder $C$. In a second step we identify the limit as the relevant convex combination of dynamical measures.

\subsection{Auxiliary function \texorpdfstring{$\varphi_{P}$}{} and convergence for \texorpdfstring{$\munbe$}{}}
We shall need the function $\varphi_{P}$ defined on $\bb R^{q}$ by
\begin{equation}\label{eqphi}
\varphi_P(z)=-\frac{\beta}{2}\|z\|^2+\log\sum_{k=1}^qe^{\be z_k}.
\end{equation}
This function attains its maximum on $\bb R^q$ since $\varphi_P(z)\leq -c\|z\|^2$
as $\|z\|$ tends to $\8$.
We recall Theorem 2.1 of \cite{EllisWang90}, which describes precisely the global maximum points of $\varphi_P$.

\begin{theorem}\label{EllisWang}\emph{(Ellis Wang \cite{EllisWang90})}

Let $\be_c=\frac{2(q-1)\log(q-1)}{q-2}$. For $0<\be<\be_c$ set $s_\be=0$ and for $\beta\geq \beta_c$ let $s_\be$ be the largest solution of the equation
\begin{equation}
s=\frac{e^{\be s}-1}{e^{\be s}+q-1}.
\end{equation}
The function $\be\mapsto s_\be$ is strictly increasing on the interval $[\be_c,+\8[$,  $s(\be_c)=\frac{q-2}{q-1}$, and $\lim_{\be\to\8} s_\be=1$.

Denote by $\phi$ the function from $[0,1]$ into $\bb R^q$ defined by
$$\phi(s)=\pare{\frac{1+(q-1)s}{q},\frac{1-s}{q},\cdots,\frac{1-s}{q}},$$
the last $(q-1)$ components all equal $\frac{1-s}{q}$.
Let $K_\beta$ denote the set of global maximum points of the symmetric function $\varphi_P$.
Define $\nu^0=\phi(0)=\pare{\frac{1}{q},\cdots,\frac{1}{q}}$. For $\beta\geq \beta_c$, define 
$\nu^1(\beta)=\phi(s_\be)$ and let $\nu^i(\beta)$, $i=2,\cdots, q$ denote the points in $\bb R^q$ obtained by interchanging the first and ith coordinates of $\nu^1(\beta)$. Then
$$K_\be=\left\{\begin{aligned}
&\{\nu^0\} &\textrm{ for } 0<\beta<\beta_c,\\
&\{\nu^1(\beta),\nu^2(\beta),\cdots,\nu^q(\beta)\} &\textrm{ for } \beta>\beta_c,\\
&\{\nu^0,\nu^1(\beta_c),\nu^2(\beta_c),\cdots,\nu^q(\beta_c)\} &\textrm{ for } \beta=\beta_c.
\end{aligned}\right.$$
For $\beta\geq \beta_c$ the points in $K_\beta$ are all distinct. 
\end{theorem}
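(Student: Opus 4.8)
The plan is to treat this as a finite-dimensional, smooth, coercive optimization and to reduce it to the analysis of one scalar equation. Since the quadratic term dominates, $\varphi_P$ attains its maximum (as already noted), so I would start from the first-order conditions. Differentiating gives $\partial_{z_k}\varphi_P(z) = -\be z_k + \be\, e^{\be z_k}/S$ with $S := \sum_j e^{\be z_j}$, so every critical point satisfies $z_k = e^{\be z_k}/S$ for all $k$. Summing over $k$ shows $\sum_k z_k = 1$ and $z_k>0$: all critical points lie in the open simplex. The structural observation that drives everything is that each coordinate $z_k$ is a root of the single equation $S u = e^{\be u}$; since $u \mapsto e^{\be u} - S u$ is strictly convex it has at most two roots, so at any critical point the coordinates take at most two distinct values.

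By the permutation symmetry of $\varphi_P$ I would then sort the critical points by the number $r$ of coordinates carrying the larger value, reducing to finitely many one-parameter families. For the family with a single distinguished coordinate ($r=1$) I restrict $\varphi_P$ to the segment $\phi(s)$ and compute
\begin{equation*}
\frac{d}{ds}\varphi_P(\phi(s)) = \frac{\be(q-1)}{q}\left(\frac{e^{\be s}-1}{e^{\be s}+q-1} - s\right),
\end{equation*}
using $z_1 - z_k = s$ and $(e^{\be z_1}-e^{\be z_k})/S = (e^{\be s}-1)/(e^{\be s}+q-1)$. Hence the interior critical points of this reduced function are exactly the solutions of the fixed-point equation \eqref{sbeta}. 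I would then analyze that scalar equation: $s=0$ is always a solution, and the nonzero solutions are created in pairs by a saddle-node bifurcation (tangency of $s \mapsto (e^{\be s}-1)/(e^{\be s}+q-1)$ with the diagonal). The strict monotonicity of the largest branch $s_\be$, its limit $1$ as $\be\to\infty$, and the value $s_{\be_c}=(q-2)/(q-1)$ all follow from the implicit function theorem and direct estimates in this one variable.

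The transition value $\be_c$ I would pin down by coexistence rather than tangency: it is where the ordered height $\varphi_P(\phi(s_\be))$ first equals the disordered height $\varphi_P(\nu^0)$. Carrying out the equal-height computation singles out $s=(q-2)/(q-1)$ as the common critical parameter, and substituting into the fixed-point equation gives $e^{\be_c s}=(q-1)^2$, hence $\be_c = 2(q-1)\log(q-1)/(q-2)$. Below $\be_c$ the disordered point $\nu^0$ strictly dominates; above it the $q$ permutations of $\phi(s_\be)$ strictly dominate; at $\be_c$ all $q+1$ points tie, which yields the three cases for $K_\be$.

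The main obstacle lies in the comparison step, not in any single computation. First, I must rule out the families with $r\ge 2$ distinguished coordinates: the analogous reduced derivative is $\frac{\be r(q-r)}{q}\big((e^{\be t}-1)/(re^{\be t}+q-r)-t\big)$, and one must show, either by comparing the resulting critical values or by a second-order Hessian argument that such configurations fail to be local maxima (the Hessian at a critical point is $-\be I + \be^2(\operatorname{diag}(z)-zz^{T})$, and the eigenvalue equal to the large coordinate value, of multiplicity $r-1$, obstructs maximality once $r\ge 2$), so that only $r=1$ and $s=0$ survive. Second, the borderline case $\be=\be_c$ needs the exact equality of heights together with the verification that the maximizers are precisely the $q+1$ listed points and that they are distinct; this is where the first-order nature of the transition must be handled with care.
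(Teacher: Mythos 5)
A preliminary remark on the comparison itself: the paper contains \emph{no} proof of this statement. It is imported verbatim as Theorem 2.1 of Ellis--Wang \cite{EllisWang90} (the paper also borrows Proposition 2.2 of that reference, on negative definiteness of the Hessian at the maximum points, inside the proof of Lemma \ref{lemCWP}). So your attempt can only be measured against Ellis--Wang's argument, and your skeleton is essentially that classical one. The pieces you actually write down are correct. Critical points satisfy $z_k=e^{\be z_k}/S$ with $S=\sum_j e^{\be z_j}$, hence lie in the open simplex, and each coordinate is a root of the strictly convex function $u\mapsto e^{\be u}-Su$, so at most two values occur. The reduced derivative $\frac{d}{ds}\varphi_P(\phi(s))=\frac{\be(q-1)}{q}\bigl(\frac{e^{\be s}-1}{e^{\be s}+q-1}-s\bigr)$ is exact. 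The Hessian at a critical point is indeed $-\be I+\be^2\bigl(\mathrm{diag}(z)-zz^{T}\bigr)$, and your obstruction for $r\ge 2$ closes rigorously: if $r\ge2$ coordinates carry the larger value $a$ of a two-valued critical point, then $a$ is the larger root of the strictly convex function above, so $\be e^{\be a}>S$, i.e.\ $\be a>1$; the zero-sum vectors supported on those coordinates form an $(r-1)$-dimensional eigenspace with eigenvalue $\be(\be a-1)>0$, so such a point is not a local maximum. Finally, $s=\frac{q-2}{q-1}$ together with $e^{\be s}=(q-1)^{2}$ does solve \eqref{sbeta} and does give $\be_c=\frac{2(q-1)\log(q-1)}{q-2}$.

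The genuine gap is the comparison in $\be$, which is exactly the content of the three-case formula for $K_\be$. Setting $h(s,\be):=\varphi_P(\phi(s))-\varphi_P(\nu^0)$, one computes (via $\|\phi(s)\|^{2}=\frac{1+(q-1)s^{2}}{q}$) that $h(s,\be)=-\frac{\be(q-1)}{2q}s^{2}-\frac{\be}{q}s+\log\frac{e^{\be s}+q-1}{q}$, and your coexistence computation amounts to checking $h=0$ at $(s,\be)=\bigl(\frac{q-2}{q-1},\be_c\bigr)$. But exhibiting one parameter value with equal heights is not the theorem: nonzero critical points already exist for a range of $\be$ strictly below $\be_c$ (they appear at a saddle-node value $\be_1<\be_c$; this is precisely why the transition is first order), so you must prove a single-crossing statement --- $h(s_\be,\be)<0$ for \emph{all} $\be<\be_c$ and $h(s_\be,\be)>0$ for \emph{all} $\be>\be_c$ --- together with the fact that among the nonzero solutions of \eqref{sbeta} it is the largest one that is the relevant candidate (the other nonzero solution is a local minimum of $h(\cdot,\be)$ in the relevant regime), plus the monotonicity and limit claims for $s_\be$. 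In the proposal all of this is asserted rather than proved, yet it is the substance of the case distinction. The missing idea can be supplied by an envelope argument: at any solution of \eqref{sbeta} one has $\frac{e^{\be s}}{e^{\be s}+q-1}=\frac{1+(q-1)s}{q}$, hence $\partial_\be h(s,\be)=\frac{(q-1)s^{2}}{2q}>0$, so along the maximal critical branch $\be\mapsto h(s_\be,\be)$ is strictly increasing and therefore vanishes at exactly one $\be$, which your computation identifies as $\be_c$; distinctness of the $q+1$ maximizers at $\be_c$ then follows from $s_{\be_c}=\frac{q-2}{q-1}>0$ (recall $q\ge 3$). With that lemma your outline becomes a complete proof; without it, the displayed description of $K_\be$ is not established.
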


We fix a finite word $\om=\om_0\cdots\om_{p-1}$ of length $p$ and we compute the limit
of $\munbe([\omega])$.

\begin{lemma}\label{lemCWP}

\begin{equation*}
\lim\limits_{n\to\8}\munbe([\om])=
\left\{\begin{aligned}&\frac{1}{q^p}\textrm{ if }\beta<\beta_c,\\
&\frac{1}{q}\frac{1}{(e^{\beta s_\be} +q-1)^{p}}\,\sum_{k=1}^q e^{\be s_\be L_{p,k}(\om)}\textrm{ if }\beta> \beta_c,\\
&\frac{\frac{A}{q^p}+\,
\frac{B}{(e^{\beta s_\be} +q-1)^{p}}\sum_{k=1}^q e^{\be_c s_{\be_c} L_{p,k}(\om)}}{A+qB}\textrm{ if }\beta= \beta_c.\end{aligned}\right.
\end{equation*}
\end{lemma}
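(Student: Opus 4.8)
The plan is to follow the proof of part (4) of Theorem \ref{theo-originalsin}, replacing the scalar Hubbard--Stratonovich trick by its $q$-dimensional version. Writing $\|L_n(\om)\|^2=\sum_{k=1}^qL_{n,k}(\om)^2$ and applying the identity $e^{a^2}=\frac1{\sqrt{2\pi}}\int e^{-x^2/2+\sqrt2\,ax}\,dx$ to each of the $q$ squares, followed in each coordinate by the change of variable $\be z_k=\sqrt{\be/n}\,x_k$, I would obtain
$$Z_{n,\be}\munbe([\om])=\pare{\frac{\be n}{2\pi}}^{\!q/2}\int_{\bb R^q}e^{-n\frac\be2\|z\|^2}\int_{\S_q}e^{\be\bra{z,L_n(\al)}}\BBone_{[\om]}(\al)\,d\bb P(\al)\,dz,$$
with $\bra{z,L_n(\al)}=\sum_kz_kL_{n,k}(\al)$. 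The point is that $\bra{z,L_n(\al)}=S_n(\psi_z)(\al)$ for the locally constant potential $\psi_z:=\sum_{k=1}^qz_k\BBone_{[\theta^k]}$, so the inner integral is treated exactly as in \eqref{eq2-cvmunbe} via the transfer operator $\CL_z$ associated to $\be\psi_z$.

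Since the shift is full and $\psi_z$ depends only on the first symbol, $\CL_z$ is explicit: its dominant eigenfunction is constant and its dominant eigenvalue is $\l_z=\sum_{k=1}^qe^{\be z_k}$, so that $\log\l_z$ is exactly the second term of $\varphi_P$ in \eqref{eqphi}. A direct count of the $q^{\,n-p}$ preimages gives, for $n>p$,
$$\CL_z^n(\BBone_{[\om]})(\theta)=e^{\be\bra{z,L_p(\om)}}\l_z^{\,n-p},\qquad \CL_z^n(\1)(\theta)=\l_z^{\,n},$$
both independent of $\theta$. Cancelling the common prefactor $\pare{\be n/2\pi}^{q/2}q^{-n}$ between numerator and denominator as in \eqref{quotient}, the problem reduces to
$$\munbe([\om])=\frac{\displaystyle\int_{\bb R^q}e^{n\varphi_P(z)}g_\om(z)\,dz}{\displaystyle\int_{\bb R^q}e^{n\varphi_P(z)}\,dz},\qquad g_\om(z):=\frac{e^{\be\bra{z,L_p(\om)}}}{\l_z^{\,p}}.$$
This is in fact cleaner than the Curie--Weiss case: the explicit form of $\CL_z^n$ removes the spectral remainder term, so the uniform Laplace statement of Lemma \ref{Laplace-uniforme} is not needed, only the ordinary multidimensional Laplace method.

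It remains to carry out the asymptotics. First I would control the tails: since $\varphi_P(z)\le-c\|z\|^2$ for $\|z\|$ large and $g_\om\le1$ (because $\bra{z,L_p(\om)}\le p\max_kz_k$ while $\l_z^{\,p}\ge e^{\be p\max_kz_k}$), both integrals concentrate on arbitrarily small neighbourhoods of the global maximum set $K_\be$ of $\varphi_P$ given by Theorem \ref{EllisWang}. Applying the $q$-dimensional Laplace method at each point of $K_\be$ and letting the universal factor $\pare{2\pi/n}^{q/2}e^{n\max\varphi_P}$ cancel, I get
$$\lim_{n\to\8}\munbe([\om])=\frac{\sum_{z^\ast\in K_\be}\val{\det D^2\varphi_P(z^\ast)}^{-1/2}g_\om(z^\ast)}{\sum_{z^\ast\in K_\be}\val{\det D^2\varphi_P(z^\ast)}^{-1/2}}.$$
For $\be<\be_c$ there is a single maximum $\nu^0=(1/q,\dots,1/q)$, the Hessian factor cancels, and $g_\om(\nu^0)=1/q^p$ follows from $\bra{\nu^0,L_p(\om)}=p/q$ and $\l_{\nu^0}=qe^{\be/q}$. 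For $\be>\be_c$ the maxima $\nu^1,\dots,\nu^q$ are permutations of one another, so by the symmetry of $\varphi_P$ their Hessian determinants coincide and drop out, leaving $\frac1q\sum_kg_\om(\nu^k)$; using $\bra{\nu^k,L_p(\om)}=\frac{1-s_\be}{q}p+s_\be L_{p,k}(\om)$ and $\l_{\nu^k}=e^{\be(1-s_\be)/q}(e^{\be s_\be}+q-1)$ gives $g_\om(\nu^k)=e^{\be s_\be L_{p,k}(\om)}/(e^{\be s_\be}+q-1)^p$, which is the announced expression.

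The computational heart is the case $\be=\be_c$, where $K_{\be_c}=\{\nu^0,\nu^1,\dots,\nu^q\}$ mixes points with genuinely different Hessians, so the determinants no longer cancel and instead produce the weights $A$ and $B$. I would diagonalise $D^2\varphi_P=-\be I+\be^2(\diag(p)-pp^{\top})$, where $p=z^\ast$ since critical points satisfy $z^\ast_k=e^{\be z^\ast_k}/\l_{z^\ast}$. At $\nu^0$ the eigenvalues are $-\be$ (along the all-ones direction) and $-\be(1-\be/q)$ with multiplicity $q-1$; at $\nu^1$ they are $-\be$, then $-\be(1-\be b)$ with multiplicity $q-2$ and $-\be(1-\be\mu)$, where $b=\frac{1-s}{q}$ and $\mu=\frac{(1-s)(1+(q-1)s)}{q}$. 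Substituting the Ellis--Wang value $s_{\be_c}=\frac{q-2}{q-1}$ simplifies these to $b=\frac1{q(q-1)}$ and $\mu=\frac1q$, whence
$$\frac{\val{\det D^2\varphi_P(\nu^1)}}{\val{\det D^2\varphi_P(\nu^0)}}=\frac{\pare{1-\frac{\be_c}{q(q-1)}}^{q-2}}{\pare{1-\frac{\be_c}{q}}^{q-2}}=\frac{A^2}{B^2};$$
the common factor $\be_c^{q/2}(1-\be_c/q)^{1/2}$ cancels in the ratio and one recovers exactly the weights $A,B$ of the statement, the Hessian at the symmetric point $\nu^0$ carrying $B$ and that at the broken points $\nu^k$ carrying $A$. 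The main obstacle is precisely this explicit Hessian determinant computation, together with the verification of negative definiteness at each global maximum (which reduces to elementary inequalities such as $\be_c<q$); the rest is a routine transcription of the Laplace argument already used for Theorem \ref{theo-originalsin}.
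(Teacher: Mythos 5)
Your proposal is correct and follows essentially the same route as the paper's proof: the $q$-dimensional Hubbard--Stratonovich identity reduces $\munbe([\om])$ to a ratio of Laplace integrals with phase $\varphi_P$ and amplitude $f=g_\om$, the set of maxima is taken from Theorem \ref{EllisWang}, and the multidimensional Laplace method with inverse-square-root Hessian weights produces the three cases, the weights $A,B$ at $\be=\be_c$ arising from the distinct Hessian determinants at $\nu^0$ and $\nu^1$ exactly as in the paper. The only differences are cosmetic and harmless: you dress the exact combinatorial count $\sum_{\al}e^{\be\bra{z,L_{n-p}(\al)}}=\l_z^{\,n-p}$ in transfer-operator language (correctly noting that its exactness removes any spectral remainder, so no uniform Laplace lemma is needed), and you diagonalize the Hessians $-\be I+\be^{2}(\diag(p)-pp^{\top})$ yourself where the paper quotes the eigenvalues from Proposition 2.2 of \cite{EllisWang90}.
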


\begin{proof}
We want to evaluate the limit of
$$\munbe([\om])=\sum_{\al,\ |\al|=n-p}\munbe([\om\al])
=\frac{\sum\limits_{\al,\ |\al|=n-p}e^{\frac{\be}{2n}\|L_{n}(\om\al)\|^2}}{\sum\limits_{\al,\ |\al|=n}e^{\frac{\be}{2n}\|L_{n}(\al)\|^2}}.$$
With the help of the identity
\begin{equation}
e^{\|u\|^2}=\frac{1}{(2\pi)^{q/2}}\int_{\bb R^q} \exp\pare{-\frac{1}{2}\|y\|^2+\sqrt 2\langle y,u\rangle}\,dy,
\end{equation}
and noticing that $L_n(\om\al)=L_p(\om)+L_{n-p}(\al)$,
we write
\begin{eqnarray*}
\sum_{\al,\ |\al|=n-p}e^{\frac{\be}{2n}\|L_{n}(\om\al)\|^2}
&=&\frac{1}{(2\pi)^{q/2}}\int_{\bb R^q} e^{-\frac{1}{2}\|y\|^2}\sum_{\al}
e^{\sqrt{\frac{\beta}{n}}\langle y,L_n(\omega\al)\rangle}\,dy\\
&=&\frac{1}{(2\pi)^{q/2}}\int_{\bb R^q} e^{-\frac{1}{2}\|y\|^2+\sqrt{\frac\beta{n}}\langle y,L_p(\om)\rangle}\sum_{\al}
e^{{\sqrt{\frac{\beta}{n}}\langle y,L_{n-p}(\al)\rangle}}\,dy.
\end{eqnarray*}
It is easily seen that
$$\sum_{\al,|\al|=n-p}
e^{{\sqrt{\frac{\beta}{n}}\langle y,L_{n-p}(\al)\rangle}}
=\pare{\sum_{k=1}^q e^{\sqrt{\frac{\beta}{n}}y_k}}^{n-p},$$
therefore we get 
\begin{multline*}\sum_{\al,\ |\al|=n-p}e^{\frac{\be}{2n}\|L_{n}(\om\al)\|^2}=\\
\frac{1}{(2\pi)^{q/2}}\int_{\bb R^q} \exp\pare{-\frac{1}{2}\|y\|^2+
\sqrt{\frac{\beta}{n}}\langle y,L_p(\om)\rangle+(n-p)\log\pare{\sum_{k=1}^q e^{\sqrt{\frac{\beta}{n}}y_k}}}\,dy.
\end{multline*}
Now we make the change of variable $\beta z=\sqrt{\frac{\beta}{n}}y$, and we obtain
\begin{equation}
\sum_{\al,\ |\al|=n-p}e^{\frac{\be}{2n}\|L_{n}(\om\al)\|^2}=
\pare{\frac{n\be}{2\pi}}^{q/2}\int_{\bb R^q} e^{n\varphi_P(z)}f(z)\,dz,
\end{equation}
where $\varphi_P$ was defined in \eqref{eqphi} and $f$ is defined on $\bb R^q$ by
\begin{equation}\label{eqf}
f(z)=\exp\pare{\beta\langle z,L_p(\om)\rangle -p\log\pare{\sum_{k=1}^qe^{\beta z_k}}}.
\end{equation}
Similarly, $p=0$ yields
$$\sum_{\al,\ |\al|=n}e^{\frac{\be}{2n}\|L_{n}(\al)\|^2}=
\pare{\frac{n\be}{2\pi}}^{q/2}\int_{\bb R^q} e^{n\varphi_P(z)}\,dz,$$
hence
$$\munbe([\om])=\frac{\sum_{\al,\ |\al|=n-p}e^{\frac{\be}{2n}\|L_{n}(\om\al)\|^2}}{\sum_{\al,\ |\al|=n}e^{\frac{\be}{2n}\|L_{n}(\al)\|^2}}
=\frac{\int_{\bb R^q} e^{n\varphi_P(z)}f(z)\,dz}{\int_{\bb R^q} e^{n\varphi_P(z)}\,dz}.$$
We denote by $D\varphi_P(z)$, respectively $\CH(z)$, the gradient, respectively the Hessian matrix, of $\varphi_P$ at $z$.
It is proved in Proposition 2.2 of \cite{EllisWang90} that the Hessian matrix of $\varphi_P$ is negative definite at each global maximum point of $\varphi_P$.
Now if $D\varphi_P$ vanishes at a single point $z_0$ in an open set $O$ of $\bb R^q$, if $\CH(z_0)$ is negative definite and if $f(z_0)\neq 0$, then we know by Laplace's method that
$$\int_{0} e^{n\varphi_P(z)}f(z)\,dz\sim_{n\to\infty} \frac{(2\pi)^{q/2} f(z_0)e^{n\varphi_P(z_0)}}{n^{q/2}\sqrt{|\det \CH(z_0)|}}.$$

\textbf{If $0<\beta<\beta_c$ :} according to Theorem \ref{EllisWang}, $\varphi_P$ attains its maximum at the unique point $\nu^0$ so applying Laplace's method yields
$$\munbe([\om])\sim_{n\to\8}\frac{f(\nu^0)}{1}=\frac{1}{q^p}.$$

\textbf{If $\beta>\beta_c$ :} Theorem \ref{EllisWang} states that $\varphi_P$ attains its maximum at exactly $q$ points $\nu^i(\beta)$, $i=1,\cdots,q$, where $\nu^i(\beta)$, $i=2,\cdots,q$ is obtained by interchanging the first and ith coordinates of $\nu^1(\beta)$.
Due to the symmetry of the function $\varphi_P$ it is clear that $\det \CH(\nu^i)=\det \CH(\nu^1)$,
$i=2,\cdots,q$.
Considering a family of disjoint open sets $(O_i)_{1\leq i\leq q}$ such that $O_i$ contains $\nu^i$ and $\bb R^q=\cup_{i=1}^q O_i \cup N$, where $N$ is a set of measure zero, Laplace's method yields
$$\munbe([\om])\sim_{n\to\8}\frac{1}{q}\sum_{i=1}^q f(\nu^i).$$
Recall that 
$$f(\nu^i)=\frac{e^{\beta\langle\nu^i,L_p(\om)\rangle}}{\pare{\sum_{k=1}^q e^{\be \nu^i_k}}^p}$$
with 
$$\nu^i_k=\left\{\begin{aligned}&\frac{1-s_\be}{q}& \textrm{ if } k\neq i,\\
&\frac{1+(q-1)s_\be}{q}& \textrm{ if } k=i.\end{aligned}\right.$$
As $\sum_{k=1}^q L_{p,k}(\om)=p$ it is easily seen that
\begin{equation}\label{int1}
e^{\beta\langle\nu^i,L_p(\om)\rangle}=\exp\pare{\frac{\be p(1-s_\be)}{q}+\be s_\be L_{p,i}(\om)}.
\end{equation}
As $\nu^i$ is a critical point of $\varphi_P$ and 
$\frac{\partial\varphi_P}{\partial z_i}(z)=\frac{\beta e^{\beta z_i}}{\sum_{k=1}^q e^{\be z_k}}-\beta z_i$, we know that
\begin{equation}\label{int2}
\sum_{k=1}^q e^{\be \nu^i_k}=\frac{e^{\be \nu^i_j}}{\nu^i_j}=
\frac{q}{1-s_\be}\,e^{\frac{\be(1-s_\be)}{q}}.
\end{equation}
Putting together \eqref{int1} and \eqref{int2} we obtain
\begin{equation*}
f(\nu^i)=\pare{\frac{1-s_\be}{q}}^p e^{\be s_\be L_{p,i}(\om)},
\end{equation*}
which can also be written
\begin{equation}\label{fnui}
f(\nu^i)=\frac{1}{(e^{\beta s_\be} +q-1)^{p}}\, e^{\be s_\be L_{p,i}(\om)}
\end{equation}
since $s_\be$ is solution of the equation \eqref{sbeta}.
Therefore
$$\munbe([\om])\sim_{n\to\8}\frac{1}{q}\frac{1}{(e^{\beta s_\be} +q-1)^{p}}\sum_{i=1}^q e^{\be s_\be L_{p,i}(\om)}.$$

\textbf{If $\beta=\beta_c$ :} the function $\varphi_P$ admits exactly $q+1$ maximun points $\nu^i(\beta)$, $i=0,\cdots,q$ but $\det \CH(\nu^0)\neq\det \CH(\nu^1)$,
therefore Laplace's method yields
\begin{equation}\label{munbec}
\munbe([\om])\sim_{n\to\8}\frac{|\det \CH(\nu^0)|^{-1/2}f(\nu^0)+
|\det \CH(\nu^1)|^{-1/2}\sum_{i=1}^q f(\nu^i)}{|\det \CH(\nu^0)|^{-1/2}+
q \;|\det \CH(\nu^1)|^{-1/2}}.\end{equation}
In the proof of Proposition 2.2 of \cite{EllisWang90} it is proved that $\CH(\nu^0)$
has a simple eigenvalue at $\be$ and an eigenvalue of multiplicity $(q-1)$ at $\be q^{-1}(q-\be)$ whereas $\CH(\nu^1)$
has simple eigenvalues at $\be$ and $\beta-\beta^2 qab$ and an eigenvalue of multiplicity $(q-2)$ at $\beta-\beta^2 b$, where $a=q^{-1}(1+(q-1)s_\be)$ and $b=q^{-1}(1-s_\beta)$. Recalling that $s(\beta_c)=\frac{q-2}{q-1}$ we deduce that
$$|\det \CH(\nu^0)|=\be_c^q(1-q^{-1}\beta_c)^{q-1},$$
$$|\det \CH(\nu^1)|=\be_c^q(1-q^{-1}\beta_c)\pare{1-\frac{\be_c}{q(q-1)}}^{q-2}.$$
Reporting in \eqref{munbec} and recalling \eqref{fnui} we get the result.
\end{proof}

\subsection{Identification of the limit}
We can already deduce from Lemma \ref{lemCWP} that $\mu_{n,\beta}\stackrel[n\to+\8]{w}{\longrightarrow}\wt\mu_{0}$ if $\be<\be_c$.

\begin{lemma}\label{lem-muk}\emph{\textbf{Computation for $\wt{\mu}_{b}^{k}$}}

For $k=1,\ldots,q$,
\begin{equation}\label{eq-muk}
\wt{\mu}_{b}^{k}([\om])=\frac{e^{b L_{p,k}(\om)}}{(e^{b}+q-1)^{p}}.
\end{equation}
\end{lemma}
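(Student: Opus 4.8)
The potential here is $\psi=\BBone_{[\theta^k]}$, which depends only on the coordinate $\om_0$ and is therefore locally constant, hence H\"older continuous; the Ruelle--Bowen theory recalled above (see the facts on page~\pageref{rem-meas-confor}) applies, and $\wt\mu_b^k$ is the unique $\s$-invariant probability satisfying the Gibbs bound \eqref{eq-def-gibbs-ergo} for the potential $b\psi$. The plan is to exhibit an explicit Bernoulli measure, to check that it satisfies this bound, and to conclude by uniqueness.

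First I would compute the pressure through the transfer operator. For every $x$ one has $\CL_b(\1)(x)=\sum_{a\in\Lambda}e^{b\BBone_{a=\theta^k}}=e^b+q-1$, so the constant function $\1$ is a positive eigenfunction of $\CL_b$ with eigenvalue $\l_b=e^b+q-1$; by the facts recalled on page~\pageref{rem-meas-confor} this forces $H_b\equiv1$ and $\CP(b\psi)=\log(e^b+q-1)$.

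Next I would introduce the candidate, namely the product measure $\mu=\rho_b^{\otimes\N}$ whose one-site marginal $\rho_b$ assigns mass $\frac{e^b}{e^b+q-1}$ to $\theta^k$ and mass $\frac{1}{e^b+q-1}$ to each of the remaining $q-1$ symbols (these masses summing to $1$). For a word $\om=\om_0\cdots\om_{p-1}$, exactly $L_{p,k}(\om)$ of the corresponding factors equal $\frac{e^b}{e^b+q-1}$ and the remaining $p-L_{p,k}(\om)$ equal $\frac{1}{e^b+q-1}$, whence $\mu([\om])=\dfrac{e^{bL_{p,k}(\om)}}{(e^b+q-1)^p}$, which is the asserted right-hand side of \eqref{eq-muk}.

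Finally I would identify $\mu$ with $\wt\mu_b^k$. The key observation is that for any $x\in[\om]$ one has $S_p(\psi)(x)=\sum_{i=0}^{p-1}\BBone_{x_i=\theta^k}=L_{p,k}(\om)$, since $\psi$ reads only the current coordinate; thus $S_p(\psi)$ is constant on the cylinder $[\om]$. Combining this with the value of the pressure gives $e^{bS_p(\psi)(x)-p\CP(b\psi)}=\dfrac{e^{bL_{p,k}(\om)}}{(e^b+q-1)^p}=\mu([\om])$, so the ratio in \eqref{eq-def-gibbs-ergo} is identically $1$ and $\mu$ satisfies the Gibbs bound with constant $C_b=0$. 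As $\mu$ is plainly $\s$-invariant, the uniqueness of the dynamical Gibbs measure forces $\mu=\wt\mu_b^k$, which is \eqref{eq-muk}. (Equivalently, one checks that $\mu$ solves the conformal equation \eqref{eq-def-mesconform}, so that $\mu=\nu_b$ and then $\wt\mu_b^k=H_b\nu_b=\nu_b=\mu$ since $H_b\equiv1$.) The computation is elementary throughout; the only point deserving care---rather than a genuine obstacle---is that $S_p(\psi)$ is constant on each $p$-cylinder precisely because $\psi=\BBone_{[\theta^k]}$ depends on a single coordinate, which is exactly what collapses the Gibbs ratio to $1$ and makes the equilibrium state Bernoulli.
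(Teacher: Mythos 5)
Your proof is correct, and it reaches \eqref{eq-muk} by a genuinely different route than the paper does. The paper stays entirely on the variational side: since $b\BBone_{[\theta^k]}$ depends only on the zeroth coordinate, the supremum in \eqref{eq-def-pressure} is attained at a product measure $(m^k)^{\otimes\N}$, and the problem collapses to maximizing the finite-dimensional functional $-\sum_{j=1}^q p_j\log p_j+bp_k$ over probability vectors on $\L$ (a reduction it quotes from Example 4.2.2 of \cite{Keller}); solving it gives $m^k_k=e^b/(e^b+q-1)$ and $m^k_j=1/(e^b+q-1)$ for $j\neq k$, whence the cylinder formula. You instead work on the Gibbs/transfer-operator side: from $\CL_b\1=(e^b+q-1)\1$ you read off $\l_b=e^b+q-1$, $H_b\equiv\1$ and $\CP(b\BBone_{[\theta^k]})=\log(e^b+q-1)$, you note that $S_p(\BBone_{[\theta^k]})$ is constant equal to $L_{p,k}(\om)$ on the cylinder $[\om]$, so your Bernoulli candidate satisfies \eqref{eq-def-gibbs-ergo} with $C_b=0$, and uniqueness of the invariant measure with the Gibbs property concludes. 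Each approach buys something: the paper's is shorter given the external reference and uses nothing beyond the definition of equilibrium state; yours is self-contained modulo the Facts recalled on page \pageref{rem-meas-confor}, and it yields extra output for free, namely the explicit pressure $\log(e^b+q-1)$ and the identity $\wt{\mu}_{b}^{k}=\nu_b$ (the DGM coincides with the conformal measure because $H_b\equiv\1$), which is the exact analogue of the remark the paper makes after Theorem \ref{theo-originalsin} in the Curie--Weiss case. One step you should make explicit: from $\CL_b\1=(e^b+q-1)\1$ alone you must still rule out that $e^b+q-1$ is a non-dominating eigenvalue before writing $\l_b=e^b+q-1$; positivity of $\CL_b$ gives $\|\CL_b^nT\|_\infty\le(e^b+q-1)^n\|T\|_\infty$, so the spectral radius equals $e^b+q-1$, and simplicity of the dominating eigenvalue then forces $H_b$ to be constant. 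This is a one-line addition, not a gap.
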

\begin{proof}
The function $b \BBone_{[\theta^k]}$ depends only on the zero coordinate, therefore the supremum in \eqref{eq-def-pressure} is attained for the product measure $(m^k)^{\otimes\bb N}$,
where the probability vector $(m^k_j)_{1\leq j\leq q}$ on $\Lambda$ maximizes the quantity
$$-\sum_{j=1}^q p_j\log p_j+b p_k$$ over all the probability vectors $(p_j)_{1\leq j\leq q}$ on $\Lambda$, and is given by $m^k_k=\frac{e^b}{e^b +q-1}$, $m^k_j=\frac{1}{e^b +q-1}$ if $j\neq k$ (see for instance Example 4.2.2 of \cite{Keller}).
The result is then clear.
\end{proof}

The limit in \eqref{limCWP} is now a direct consequence of the lemmas \ref{lemCWP} and 
\ref{lem-muk}.

\bibliographystyle{plain}
\bibliography{biblioIsing}


\end{document}